\newcommand{\hc}{\hat{c}}
\newcommand{\N}{\mathbb{N}}
\newcommand{\I}{\mathcal{I}}
\newcommand{\U}{{\mathcal{U}}}
\newcommand{\Uc}{{\mathcal{U}}^\Gamma}
\newcommand{\R}{\mathbb{R}}
\newcommand{\X}{\mathcal{X}}
\newcommand{\XX}{\mathbf{X}}
\newcommand{\permx}{d^x}
\newcommand{\xx}{\mathbf{x}}
\newcommand{\x}[1]{x^{(#1)}}
\newcommand{\is}{\mathcal{O}}
\DeclareMathOperator*{\opt}{opt}
\newcolumntype{H}{>{\setbox0=\hbox\bgroup}c<{\egroup}@{}}
\newtheorem{theorem}{Theorem}
\newtheorem{lemma}[theorem]{Lemma}
\newtheorem{observation}[theorem]{Observation}
\begin{document}
\title{Faster Algorithms for Min-max-min Robustness for Combinatorial Problems with Budgeted Uncertainty}

\author[1]{Andr\'e Chassein}
\affil[1]{Data Analytics Center of Excellence, Deutsche Post DHL Group, Bonn, Germany}

\author[2]{Marc Goerigk}
\affil[2]{Network and Data Science Management, University of Siegen, Germany}

\author[3]{Jannis Kurtz}
\affil[3]{Department of Mathematics, RWTH Aachen University, Germany}

\author[4]{Michael Poss\thanks{Corresponding author. Email: michael.poss@lirmm.fr}}
\affil[4]{LIRMM, University of Montpellier, CNRS, France}

\date{}

\maketitle

\begin{abstract}
We consider robust combinatorial optimization problems where the decision maker can react to a scenario by choosing from a finite set of $k$ solutions. This approach is appropriate for decision problems under uncertainty where the implementation of decisions requires preparing the ground. We focus on the case that the set of possible scenarios is described through a budgeted uncertainty set and provide three algorithms for the problem. The first algorithm solves heuristically the dualized problem, a non-convex mixed-integer non-linear program (MINLP), via an alternating optimization approach. The second algorithm solves the MINLP exactly for $k=2$ through a dedicated spatial branch-and-bound algorithm. The third approach enumerates $k$-tuples, relying on strong bounds to avoid a complete enumeration. We test our methods on shortest path instances that were used in the previous literature and on randomly generated knapsack instances, and find that our methods considerably outperform previous approaches. Many instances that were previously not solved within hours can now be solved within few minutes, often even faster.
\end{abstract}

\textbf{Keywords: } combinatorial optimization; robust optimization; $k$-adaptability; budgeted uncertainty; branch-and-bound algorithms

\baselineskip 20pt plus .3pt minus .1pt

\section{Introduction}

Real-world problems are uncertain, and optimization approaches need tools to reflect this uncertainty. One such approach is robust optimization, which dates back to the seminal work of~\cite{soyster1973}. Since the breakthroughs arising about twenty years ago \citep{roconv,bentalRobust,kouvelis2013robust,el1998robust}, robust optimization has become a key framework to address the uncertainty that arises in optimization problems. The rationale behind robust optimization is to characterize the uncertainty over unknown parameters through a set which contains all relevant scenarios and to measure the worst-case over this set. One of the main reasons for the success of robust optimization is its tractability. For instance, linear robust optimization problems are essentially as easy as their deterministic counterparts for many types of convex uncertainty sets, see \cite{roconv}, contrasting with the well-known difficulty of stochastic optimization approaches. For general overviews on the field, we refer to \cite{aissi_minmax_survey,bertsimas2011theory,goerigk2016algorithm,gabrel2014recent}.

The picture is more complex when it comes to robust combinatorial optimization problems. Let $[n]=\{1,\ldots,n\}$ denote a set of indices and $\X\subseteq \{0,1\}^{n}$ be the feasibility set of a combinatorial optimization problem. Given a bounded uncertainty set $U\subseteq \R^n$, the classical robust counterpart of the problem $\min_{x\in\X} \sum_{i\in[n]} c_i x_i$ is
\begin{equation}
\label{eq:minmax}\tag{M$^2$}
 \min_{x\in\X} \max_{c\in \U}\sum_{i\in[n]} c_i x_i.
\end{equation}
It is well known (e.g. \cite{aissi_minmax_survey,kouvelis2013robust}) that a general uncertainty set $\U$ leads to a robust problem that is, more often than not, harder than the deterministic problem. Robust combinatorial optimization witnessed a breakthrough with the introduction of budgeted uncertainty by~\cite{BertsimasS03} (also known as $\Gamma$-uncertainty), which keeps the tractability of the deterministic counterpart for a large class of combinatorial optimization problems. Specifically, \cite{BertsimasS03} considered uncertain cost functions characterized by the vector $\hc\in\R^n$ of nominal costs and the vector $d\in\R^n_+$ of deviations. Then, given a budget of uncertainty $\Gamma>0$, they addressed uncertainty sets of the form
\[
\Uc = \left\{ c\in\R^n : c_i = \hc_i + d_i z_i,\ z\in\left[0,1\right]^n,\ \sum\limits_{i\in[n]} z_i \le \Gamma \right\}.
\]
\cite{BertsimasS03} showed how the optimal solution of problem \eqref{eq:minmax} can be obtained by solving $n+1$ deterministic counterparts of the problem. Several subsequent papers have reduced this number of deterministic problems \citep{Alvarez-MirandaLT13,LeeLPP12}, down to $\lceil\frac{n-\Gamma}{2}\rceil+1$ in \cite{Lee2014}, and extended the result to more general uncertainty polytopes \citep{Poss17}. We refer to \cite{buchheimrobust,kasperski2016robust} for recent surveys on robust combinatorial optimization.

Our focus in this paper is the alternative robust model introduced by \cite{Buchheim2017,JannisDiscrete} which is based on the idea of $k$-adaptability first introduced by \cite{bertsimas2010finite} for general robust two-stage problems. Later this idea was studied for robust two-stage problems with binary recourse in \cite{hanasusanto2015k} and for the same problems with mixed-integer recourse in \cite{subramanyam2017k}. In the latter publications both cases, uncertainty only affecting the objective function and uncertainty affecting the constraint coefficients, are studied. In contrast to this, the approach of \cite{Buchheim2017,JannisDiscrete} is limited to the case of objective uncertainty and binary decision variables. Furthermore the authors do not consider robust two-stage problems, containing first-stage and second-stage decisions, but apply the idea of $k$-adaptability to classical combinatorial problems. The main idea of the approach is that the decision maker \emph{prepares} $k$ solutions from $\X$ before knowing the scenario $c$. Then, upon full knowledge of $c$, the decision maker can choose the cheapest of the $k$ solutions that had been prepared. Using the robust paradigm the aim is to find $k$ solutions which perform well in the worst case over all scenarios while in the objective function for each scenario the best of the $k$ solutions is considered. This idea results in the problem
\begin{equation}\label{eq:minmaxmin_dU}\tag{M$^3$}
\min_{\x{1}, \ldots, \x{k} \in \X} \max_{c\in \U} \min_{j\in[k]} \sum_{i\in[n]} c_i \x{j}_i.
\end{equation}
The approach modeled by \eqref{eq:minmaxmin_dU} is typically useful in applications where some groundwork has to be made ahead of knowing the data. An example taken from \cite{hanasusanto2015k} is related to disaster management wherein one must be able to transport relief supplies or evacuating citizens in case an uncertain disaster arises (see e.g. \cite{chang2007scenario,liberatore2013uncertainty}). Here the storage locations of the supplies and possible evacuation paths have to be determined in advance. The optimized set of emergency plans, $\x{1},\ldots,\x{k}\in \X$ for a small number $k$, ought to be planned and trained for well before the disaster happens.

Similar applications arise in the context of transportation problems in logistics (e.g. the Hub-Location Problem; see \cite{alumur2012hub}). Here a company may have to make decisions in advance (e.g. reserving a fleet of trucks or parts of a railing system owned by the government, constructing facilities or hubs etc.) to provide a working supply-chain in the future. Since the future demands of the customers are uncertain, a flexible transportation system is very useful. Using the min-max-min approach a small number of transportation-plans can be calculated in advance to decide which long-term decision have to be made and to prepare all employees. Another example, taken from \cite{eufinger2017robust,subramanyam2017k}, considers a parcel service delivering to the same customers every day, i.e. $\X$ is the set of feasible solutions of the vehicle routing problem. At the beginning of each day, the company determines a route taking into account the current traffic situation. In this case again, the drivers need time to be trained for the set of possible routes, to avoid, for instance being stuck in narrow streets with large vehicles. Hence, the set of candidate routes should be small and known ahead of the departures of the drivers.

When the number of solutions is large ($k\geq n+1$), \cite{Buchheim2017} essentially show for general convex uncertainty sets that \eqref{eq:minmaxmin_dU} is not harder than its deterministic counterpart. Unfortunately, in applications that require preparing the ground, it is not practical to have too many alternatives, limiting the interest of the approach from \cite{Buchheim2017}, which requires $k\geq n+1$. Alternative solutions to \eqref{eq:minmaxmin_dU} have also been proposed in a much more general context where there are also decisions that must be taken before the uncertainty is revealed, falling into the framework of two-stage robust optimization, see~\cite{hanasusanto2015k} and~\cite{subramanyam2017k}. The former provides a MILP reformulation involving the linearization of products between binary and real variables, while the latter studies an ad-hoc branch-and-bound algorithm that branches over the assignments of solutions to scenarios. Unfortunately, these two approaches are able to prove optimality only for the smaller instances studied therein. Considering the case of budgeted uncertainty and $k=2$, \cite{Chassein2017} proved that Problem \eqref{eq:minmaxmin_dU} can be solved in polynomial time for the matroid maximization problem, the selection problem and the unconstrained problem while it is strongly NP-hard for the shortest path problem. Despite the theoretical efficiency for several combinatorial problems the procedure derived in \cite{Chassein2017} is not efficient for practical purposes. Furthermore general algorithms applicable even for the NP-hard cases are desired.

The purpose of this work is to overcome these limitations in the case of small $k$ (typically 2 or 3), proposing efficient exact  (and heuristic) algorithms for the resulting optimization problems. Our algorithms are tailored for the budgeted uncertainty set $\Uc$ for two main reasons. First, budgeted uncertainty has been successfully used in numerous applications, including transport and logistics \citep{AgraCHR18,lee2012robust}, energy production \citep{bertsimas2013adaptive}, telecommunications network design \citep{KosterKR13,LeeLPP12}, portfolio selection \citep{KawasT17}, among many others. Second, the specific structure of the set can be leveraged to provide efficient algorithms.
The contributions of this paper can be summarized as follows:
\begin{itemize}
\item We propose a local-search heuristic based on the dualized non-linear reformulation, valid for any value of $k$.
\item We solve the non-linear reformulation exactly through a spatial branch-and-bound algorithm, valid for the case $k = 2$. Our algorithm relies on strong lower bounds, tailored for $\Uc$.
\item We provide an enumeration algorithm to solve the problem for small values of $k$, typically 2 or 3. Leveraging the structure of $\Uc$, as well as ad-hoc upper and lower bounds, the algorithm is able to enumerate a small subset of the $k$-tuples to prove optimality.
\item Using shortest path instances from the literature and new randomly generated knapsack instances, we show that our methods are able to improve computation times considerably, solving problems to optimality within minutes (often seconds) that were previously unsolved in hours. For $k=4$, our heuristic provides solutions close to those obtained in the literature in small amounts of time.
\end{itemize}

The remainder of this paper is structured as follows. The algorithms based on the non-linear reformulation are presented in Section~\ref{sec:algo:convex} while the general enumerative algorithm is described in Section~\ref{sec:algo:general}. Computational experiments are discussed in Section~\ref{sec:experiments}, before concluding the paper in Section~\ref{sec:conclusions}.

\paragraph{Notations.} For any integer $n$, we denote the set $\{1,\ldots,n\}$ as $[n]$. Further, the $k$-tuple $(\x{1},\ldots,\x{k})$ is shortened to $\xx$, and $\X^k$ denotes the Cartesian product $\times_{i=1}^k \X$.

\section{Non-Linear Algorithms}
\label{sec:algo:convex}

\subsection{Problem Reformulation}
\label{sec:problemref}

Our first two algorithms address problem \eqref{eq:minmaxmin_dU} as a non-convex MINLP. Introducing the optimization variable $z$ to express the inner minimization problem of \eqref{eq:minmaxmin_dU} leads to the following min-max problem
\begin{equation}\label{eq:minmaxproblem}
\begin{aligned}
\min_{\xx\in\X^k} \max_{c\in \Uc} \max_{z} &\left\{ z : z \leq \sum_{i\in[n]} c_i \x{j}_i, \forall j\in [k]\right\}\\
= \min_{\xx\in\X^k} \max_{c,z} & \left\{ z : z \leq \sum_{i\in[n]} c_i \x{j}_i, \forall j\in [k], c\in \Uc\right\}.
 \end{aligned}
 \end{equation}
Dualizing the inner maximization problem, which is a linear optimization problem, we obtain the following non-linear compact formulation for~\eqref{eq:minmaxmin_dU}:
\begin{equation}\label{eq:nonlinearformulation}\tag{NL}
\begin{aligned}
\min_{\xx,\theta,\gamma,\alpha}\ & \sum_{j\in[k]} \sum_{i\in[n]} \hc_i \x{j}_i \alpha_j + \Gamma \theta + \sum_{i\in[n]} \gamma_i \\
\text{s.t. } & \theta + \gamma_i \ge \sum_{j\in[k]} d_i \x{j}_i \alpha_j & \forall i\in[n] \\
& \sum_{j\in[k]} \alpha_j = 1 \\
& \theta \ge 0 \\
& \gamma_i \ge 0 & \forall i\in[n] \\
& \alpha_j \ge 0 & \forall j \in[k] \\
& \xx\in\X^k .
\end{aligned}
\end{equation}
Notice that the above formulation can be linearized by replacing the product $\alpha_j\x{j}_i$ by a new variable $z_i^j$, which is restricted by the constraints $z_i^j\geq 0$ and $z_i^j \ge \alpha_j + \x{j}_i -1$. This leads to a compact mixed-integer programming formulation (MIP) \citep{hanasusanto2015k}.

\cite{Chassein2017} showed that it suffices to enumerate a finite set of $O(n^{2k-1})$ many values for $\alpha$ and $\theta$, to solve Problem \eqref{eq:nonlinearformulation} exactly. Note that if $\alpha$ and $\theta$ is fixed the non-linearity vanishes and the problem reduces to an MIP with a certain structure. The following theorem, proved in \cite{Chassein2017}, summarizes this result in detail.
\begin{theorem}{\cite{Chassein2017}}\label{main_thm}
Problem~\eqref{eq:minmaxmin_dU} with budgeted uncertainty can be solved by solving at most $O(n^{2k-1})$ subproblems of the form
\[\min_{\xx\in\X^k} \sum_{i\in[n]} f_i(\x{1}_i,\dots,\x{k}_i) \tag{$P_{sub}$} \]
where
\[f_i(\x{1}_i,\dots,\x{k}_i) := \sum_{j\in[k]}  \alpha_j \hc_i\x{j}_i + \max\left(0,\sum_{j\in[k]} \alpha_j d_i\x{j}_i  - \theta\right)\]
for some fixed values $\alpha\in\R^k_+$ and $\theta\in\R_+$.
\end{theorem}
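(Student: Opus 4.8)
The plan is to turn the min--max--min problem into a single \emph{joint} minimization over $(\xx,\alpha,\theta,\gamma)$, eliminate the easy variable $\gamma$ in closed form, and then prove that the residual minimization over the dual variables $(\alpha,\theta)$ is attained on an explicit, polynomially small candidate set; evaluating $(P_{sub})$ on that set then solves \eqref{eq:minmaxmin_dU}. First, by the strong LP duality already used to derive \eqref{eq:nonlinearformulation}, problem \eqref{eq:minmaxmin_dU} equals the minimization of the (NL)-objective taken jointly over all variables. For fixed $(\xx,\alpha,\theta)$ the only constraint coupling $\gamma$ is $\gamma_i \ge \sum_{j}\alpha_j d_i\x{j}_i - \theta$ together with $\gamma_i\ge 0$, so the optimal choice is $\gamma_i=\max(0,\ \sum_j\alpha_j d_i\x{j}_i-\theta)$. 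Substituting this back collapses the objective to $\sum_{i}f_i(\x{1}_i,\dots,\x{k}_i)+\Gamma\theta$ with $f_i$ exactly as in the statement, so it remains to show that
\[
\min_{\substack{\alpha\in\R^k_+,\ \sum_j\alpha_j=1\\ \theta\ge 0}}\ \Big(\Gamma\theta+\min_{\xx\in\X^k}\sum_i f_i(\xx_i)\Big)
\]
is attained at one of $O(n^{2k-1})$ pairs $(\alpha,\theta)$; the inner minimization is precisely $(P_{sub})$, and $\Gamma\theta$ is a constant once $(\alpha,\theta)$ is fixed.

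The heart of the argument is locating the optimal $(\alpha,\theta)$. Write $v_i(\alpha)=\sum_{j}\alpha_j d_i\x{j}_i$ and note that, for each fixed $\xx$, the function $h(\xx,\alpha,\theta)=\sum_j\alpha_j\big(\sum_i\hc_i\x{j}_i\big)+\Gamma\theta+\sum_i\max(0,v_i(\alpha)-\theta)$ is \emph{convex and piecewise linear} in $(\alpha,\theta)$, being a sum of linear terms and of the convex terms $\max(0,v_i(\alpha)-\theta)$. Let $(\xx^\star,\alpha^\star,\theta^\star)$ be a global optimum. Then $(\alpha^\star,\theta^\star)$ must minimize $h(\xx^\star,\cdot,\cdot)$ over $\Delta\times\R_+$, with $\Delta=\{\alpha\ge0:\sum_j\alpha_j=1\}$: otherwise some $(\alpha,\theta)$ gives $h(\xx^\star,\alpha,\theta)<h(\xx^\star,\alpha^\star,\theta^\star)$, and since $\min_{\xx}h(\xx,\alpha,\theta)\le h(\xx^\star,\alpha,\theta)$ this contradicts global optimality. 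A convex piecewise-linear function attains its minimum over a polytope at a vertex of the subdivision induced by its breakpoint hyperplanes intersected with the facets of the feasible region; here the breakpoint hyperplanes are exactly $\{v_i(\alpha)=\theta\}$ and the facets are $\{\alpha_j=0\}$ and $\{\theta=0\}$ (the case $\theta\to\infty$ is ruled out by coercivity of $\Gamma\theta$). Since $d_i\x{j}_i$ depends on $\xx$ only through the set $\{j:\x{j}_i=1\}\subseteq[k]$, the union over all $\xx\in\X^k$ of these hyperplanes lies in the $\xx$-independent family $\{d_i\sum_{j\in S}\alpha_j=\theta: i\in[n],\ S\subseteq[k]\}$, of cardinality $O(n)$ for fixed $k$. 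Hence $(\alpha^\star,\theta^\star)$ is a vertex of an arrangement of $O(n)$ hyperplanes in the $k$-dimensional domain $\Delta\times\R_+$, of which there are $O(n^{k})$; enumerating these and solving $(P_{sub})$ at each therefore solves the problem well within the claimed $O(n^{2k-1})$ budget (indeed $k\le 2k-1$). The exponent $2k-1$ itself is recovered by a coarser bookkeeping: each of the $k-1$ free coordinates of $\alpha$ is pinned by a ratio of two $d$-values (a pair of items), giving $O(n^{2(k-1)})$ candidates, times $O(n)$ breakpoint values of $\theta$.

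The main obstacle, and the step needing the most care, is the passage from ``optimal for the specific $\xx^\star$'' to an $\xx$-\emph{independent} candidate set, together with degeneracy. One must check that a vertex minimizer of $h(\xx^\star,\cdot,\cdot)$ exists even when the optimal face is higher-dimensional (walk to a vertex of that face along the active hyperplanes), and that this vertex, being the intersection of at least $k$ hyperplanes from the specific family $\{v_i=\theta\}_i$, remains a vertex of the larger $\xx$-independent arrangement because the latter \emph{contains} those hyperplanes. Ties among the $v_i(\alpha)$ only force more hyperplanes through a candidate point and so do not enlarge the vertex set; they are handled by a standard lexicographic or perturbation argument. As a sanity check, for $k=1$ the simplex degenerates to a point, the breakpoints reduce to $\{\theta=d_i\}\cup\{\theta=0\}$, and one recovers the classical Bertsimas--Sim enumeration of $O(n)$ values, confirming the bound is correctly normalized.
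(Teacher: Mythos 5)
The paper itself does not prove Theorem~\ref{main_thm}: it imports the result from \cite{Chassein2017}, so there is no internal proof to compare against line by line. Judged on its own, your argument is correct and essentially reconstructs the intended mechanism, and it is consistent with the machinery the paper does develop for $k=2$ in Section~\ref{sec:bb} (the function $h(\alpha)$, its convexity in $\alpha$, and the finite candidate set $\mathcal{A}$). Your three steps are all sound: (i) the joint minimization over $(\xx,\alpha,\theta,\gamma)$ is exactly \eqref{eq:nonlinearformulation}, and eliminating $\gamma_i=\max(0,\sum_j\alpha_j d_i\x{j}_i-\theta)$ yields precisely $\Gamma\theta+\sum_i f_i$; (ii) for fixed $\xx$ the resulting function is convex piecewise linear in $(\alpha,\theta)$ and coercive in $\theta$ (note this needs $\Gamma>0$, which holds by assumption), so its minimum is attained at a vertex of the subdivision induced by the breakpoint hyperplanes and the facets of $\Delta\times\R_+$; (iii) since each breakpoint hyperplane $\{d_i\sum_{j:\x{j}_i=1}\alpha_j=\theta\}$ belongs to the $\xx$-independent family $\{d_i\sum_{j\in S}\alpha_j=\theta\}$ of size $O(n\cdot 2^k)=O(n)$ for fixed $k$, some optimal $(\alpha,\theta)$ lies among the $O(n^k)$ vertices of that arrangement, and evaluating $(P_{sub})$ there both attains and certifies the optimum (each candidate gives a value $\ge$ the joint minimum, and its minimizing $\xx$ has $cost(\xx)$ bounded by that value by dual feasibility). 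Your bookkeeping actually yields $O(n^k)$ candidates rather than $O(n^{2k-1})$, which is a \emph{sharper} bound that still implies the stated one; your aside explaining how the exponent $2k-1$ arises from pinning the $k-1$ free coordinates of $\alpha$ by pairs of items is a plausible account of the coarser count in the cited reference (and matches the paper's own slightly inconsistent $O(n^3)$ versus $O(n^2)$ remarks for $k=2$). The only points deserving explicit care are the ones you already flag: replacing an arbitrary optimal $(\alpha^\star,\theta^\star)$ by a vertex of the (compact) optimal face, and handling degeneracy, both of which are routine.
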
 
\cite{Chassein2017} proves that Problem ($P_{sub}$) can be solved in polynomial time for the matroid maximization problem, the selection problem, the unconstrained problem and the shortest path problem on series-parallel graphs. Despite these positive results the author shows that Problem \eqref{eq:minmaxmin_dU} is strongly NP-hard for the shortest path problem in general.

\subsection{Local Search Heuristic}\label{sec:local}

The nonlinear part of model \eqref{eq:nonlinearformulation} is due to the product between $\x{j}$ and $\alpha$. A simple idea to avoid the nonlinearity is to search for local instead of global minima by considering only restricted search directions. Methods of this type are also known as block-coordinate descent algorithms, see, e.g. \cite{wright2015coordinate}. Instead of minimizing $\xx,\alpha,\gamma,$ and $\theta$ simultaneously, either we solve \eqref{eq:nonlinearformulation} only over the variables $\xx,\gamma,\theta$ for fixed values of $\alpha$, or we solve \eqref{eq:nonlinearformulation} over the variables $\alpha,\gamma,\theta$ and keep $\xx$ fix. The first optimization problem is called $x$-step, the second $\alpha$-step.  
To solve an $x$-step, we solve the following MIP
\begin{align*}
\min_{\xx,\gamma,\theta} \ & \sum_{j\in[k]} \sum_{i\in[n]} \alpha^j \hc_i \x{j}_i  + \sum_{i\in[n]} \gamma_i + \Gamma \theta \tag{$x$-step} \\ 
\text{s.t. } & \sum_{j\in[k]} \alpha^j d_i\x{j}_i  - \theta \leq \gamma_i &\forall i \in [n] \\
& \gamma_i  \ge 0 &\forall i \in [n] \\
& \theta \ge 0 \\
& \xx \in \X^k & \forall j\in[k]
\end{align*}
To solve an $\alpha$-step, we solve the following LP
\begin{align*}
\min_{\alpha,\gamma,\theta} \ & \sum_{j\in[k]} \sum_{i\in[n]} \alpha^j \hc_i \x{j}_i  + \sum_{i\in[n]} \gamma_i + \Gamma \theta \tag{$\alpha$-step} \\ 
\text{s.t. } & \sum_{j\in[k]} \alpha_j = 1 \\
& \sum_{j\in[k]} \alpha^j d_i \x{j}_i  - \theta \leq \gamma_i &\forall i \in [n] \\
& \gamma_i \ge 0 &\forall i \in [n] \\
& \alpha_j \ge 0 &\forall j \in [k] \\
& \theta \ge 0
\end{align*}
We start the local search with an $x$-step. As initial values for $\alpha$ we choose $\tilde{\alpha}^j = \frac{2j}{k(k+1)}$ for all $j \in [k]$. Note that $\sum_{j\in[k]} \tilde{\alpha}^j=1$. Different values for $\tilde{\alpha}$ help to break the symmetry of the model formulation. The optimal solution of the $x$-step is then used to solve the first $\alpha$-step. We iterate between $x$- and $\alpha$-steps until no further improvement is found. Note that we can use the optimal solution of an $x$-step to warm start the next $x$-step. Since the objective value decreases in each step, except of the last step, we will end up in a local minimum after a finite number of steps. 

\subsection{A Branch-and-Bound Algorithm for $k=2$}\label{sec:bb}

If $k=2$, Problem~(NL) can be rewritten in the following way (see also \cite{Chassein2017}):
\begin{equation}\label{eq:NL-2}\tag{NL-2}
\begin{aligned}
\min_{x,y,\alpha,\gamma,\theta} \ & \sum_{i\in[n]} \hc_i x_i \alpha + \sum_{i\in[n]} \hc_i y_i (1-\alpha) + \sum_{i\in[n]} \gamma_i + \Gamma \theta  \\ 
\text{s.t. } & d_ix_i \alpha + d_iy_i (1-\alpha)- \theta \leq  \gamma_i &\forall i \in [n] \\
& \gamma_i \ge 0 &\forall i \in [n] \\
& \alpha\in[0,0.5] \\
& \theta \ge 0\\
& x,y \in \X
\end{aligned}
\end{equation}
\noindent
In the following we define the optimal value of Problem \eqref{eq:NL-2} for a fixed value of $\alpha\in [0, 0.5]$ by $h(\alpha)$. Hence, our original problem can be solved if we can solve the problem $\min_{\alpha \in [0,0.5]} h(\alpha)$. From Theorem~\ref{main_thm}, we know that the candidate set $\mathcal{A}$ of optimal values for $\alpha$ is a finite set with size $O(n^3)$. Hence, a possible algorithm for the problem is to evaluate $h(\alpha)$ for each $\alpha \in \mathcal{A}$ and choose the best solution. However, solving $O(n^3)$ of these MIPs can be too time consuming. Using the structure of $h$ we can find the global minimum without evaluating $h(\alpha)$ for each $\alpha \in \mathcal{A}$.

The idea of the algorithm is to use a branch-and-bound strategy on the $\alpha$ variable and to divide the interval $[0,0.5]$ into smaller intervals. For each unexplored interval we calculate lower bounds which are described in detail below. If the list of unexplored intervals is empty, the algorithm has found the optimal solution. There are two reasons which allow to discard an interval. First, if for  an interval $\I$ it holds $\mathcal{A} \cap \I= \emptyset$ then it can be discarded since we know that the optimal solution is attained for an $\alpha \in \mathcal{A}$. Second if the lower bound for the actual interval exceeds the currently best solution, the interval can be discarded as well. 

If we cannot discard an interval $\I=[\alpha_1,\alpha_2]$ we evaluate $h(\tilde{\alpha})$ for some $\tilde{\alpha} \in \I$. This allows us to split $\I$ into two smaller sub intervals $[\alpha_1,\tilde{\alpha}]$ and $[\tilde{\alpha},\alpha_2]$. These intervals are then added to the list of unexplored intervals. It is possible that $h(\tilde{\alpha})$ improves the current best solution, which leads to an improved upper bound. 

To get a good feasible solution at the start of the algorithm, we use the local search heuristic from Section~\ref{sec:local} to find a local minimum $h(\alpha^*)$ at $\alpha^*$. The first two intervals of the list of unexplored intervals are then given as $[0,\alpha^*]$ and $[\alpha^*,0.5]$.

For the effectiveness of this algorithm the computation of the lower bound is crucial. We argue in the following how to derive a strong lower bound which is still reasonable to compute.

It was shown in \cite{Chassein2017} that $h(\alpha) = \min_{x,y\in \X} g(x,y,\alpha)$ where
\[g(x,y,\alpha) = \alpha \sum_{i\in[n]} \hc_ix_i + (1-\alpha) \sum_{i\in[n]}\hc_i y_i +  \begin{Vmatrix}\begin{pmatrix} d_1x_1 \alpha + d_1y_1(1-\alpha) \\ \vdots \\ d_nx_n \alpha + d_ny_n(1-\alpha)  \end{pmatrix}\end{Vmatrix}^{(\Gamma)}\]
and $||v||^{(\Gamma)}$  is the sum of the $\Gamma$ largest values of vector $v$. Note that $g(x,y,\alpha)$ is a piecewise affine-linear function in $\alpha$ where the breakpoints are the values of $\alpha$ for which the order of the components in $||\cdot||^{(\Gamma)}$ changes. If we increase $\alpha$, clearly each time when the order changes the slope of the next segment must increase. Therefore $g$
is convex in $\alpha$. Hence, we have that 
\[g(x,y,\alpha) \geq g(x,y,\alpha_0) + (\alpha-\alpha_0) \partial g(x,y,\alpha_0)\]
where $\partial g(x,y,\alpha_0)$ is a subdifferential for $g$. Recall that $\partial g$ is given by 
\[\partial g(x,y,\alpha_0) = \sum_{i\in[n]} \hc_i x - \sum_{i\in[n]} \hc_iy_i + \sum_{i \in I : x_i=1, y_i=0} d_i -  \sum_{i \in I: x_i=0, y_i=1} d_i\]
where $I$ is the set of the $\Gamma$ largest indices of $\begin{pmatrix} d_1x_1 \alpha_0 + d_1y_1(1-\alpha_0) \\ \vdots \\ d_nx_n \alpha_0 + d_ny_n(1-\alpha_0)  \end{pmatrix}$. The following two estimations are essential to compute the lower bound:
\begin{equation*}
\partial g(x,y,\alpha_0) \geq  \sum_{i\in[n]} \hc_ix_i - \sum_{i\in[n]} \hc_iy_i - \begin{Vmatrix}\begin{pmatrix} d_1y_1 \\ \vdots \\ d_ny_n \end{pmatrix}\end{Vmatrix}^{(\Gamma)} =: \underline{\partial g}(x,y) 
\end{equation*}
and
\begin{equation*}
\partial g(x,y,\alpha_0) \leq  \sum_{i\in[n]} \hc_ix_i - \sum_{i\in[n]} \hc_iy_i + \begin{Vmatrix}\begin{pmatrix} d_1x_1 \\ \vdots \\ d_nx_n \end{pmatrix}\end{Vmatrix}^{(\Gamma)} =: \overline{\partial g}(x,y) .
\end{equation*}
Given an interval $\I = [\alpha_1,\alpha_2]$ for which we want to find a lower bound value $L(\I)$ with $L(\I) \leq \min_{\alpha \in [\alpha_1,\alpha_2]} h(\alpha)$ the idea is to solve the following two minimization problems
\begin{equation} \label{eq:prob1proof} \min_{x,y} g(x,y,\alpha_1) + (\alpha_2-\alpha_1) \underline{\partial g}(x,y) \end{equation}
and
\begin{equation} \label{eq:prob2proof} \min_{x,y} g(x,y,\alpha_2) + (\alpha_1-\alpha_2) \overline{\partial g}(x,y). \end{equation}

\begin{lemma}
Let $(x^*_1,y^*_1)$ be an optimal solution of problem~\eqref{eq:prob1proof} and $(x_2^*,y_2^*)$ an optimal solution of problem~\eqref{eq:prob2proof}. For all $\alpha \in [\alpha_1,\alpha_2]$ it holds that 
\[ h(\alpha) \geq h(\alpha_1) + (\alpha-\alpha_1)\underline{\partial g}(x_1^*,y_1^*)\]  and
\[ h(\alpha) \geq h(\alpha_2) + (\alpha-\alpha_2)\overline{\partial g}(x_2^*,y_2^*).\]
\label{lem_new_lbs}
\end{lemma}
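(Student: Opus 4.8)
The plan is to build the two claimed affine minorants of $h$ directly from the convexity of $g$ in $\alpha$ that is recalled just before the statement, being careful that the two auxiliary problems~\eqref{eq:prob1proof} and~\eqref{eq:prob2proof} are solved with the fixed slope-weight $(\alpha_2-\alpha_1)$, whereas the bounds to be proved carry the running weight $(\alpha-\alpha_1)$.

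First I would fix an arbitrary pair $(x,y)\in\X^2$ and combine the subgradient inequality $g(x,y,\alpha)\geq g(x,y,\alpha_1)+(\alpha-\alpha_1)\partial g(x,y,\alpha_1)$ with the estimate $\partial g(x,y,\alpha_1)\geq\underline{\partial g}(x,y)$. Since $\alpha-\alpha_1\geq 0$ on $[\alpha_1,\alpha_2]$, this yields the affine-in-$\alpha$ minorant $g(x,y,\alpha)\geq G(x,y,\alpha):=g(x,y,\alpha_1)+(\alpha-\alpha_1)\underline{\partial g}(x,y)$, valid for every $(x,y)$ and every $\alpha\in[\alpha_1,\alpha_2]$.

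The key step is to reconcile the weight mismatch. Because $G(x,y,\cdot)$ is affine, writing $\alpha=\lambda\alpha_1+(1-\lambda)\alpha_2$ with $\lambda=\frac{\alpha_2-\alpha}{\alpha_2-\alpha_1}\in[0,1]$ gives $G(x,y,\alpha)=\lambda\,G(x,y,\alpha_1)+(1-\lambda)\,G(x,y,\alpha_2)$. I then bound the two endpoints uniformly in $(x,y)$: at $\alpha_1$ one has $G(x,y,\alpha_1)=g(x,y,\alpha_1)\geq h(\alpha_1)$, and at $\alpha_2$ one has $G(x,y,\alpha_2)\geq g(x_1^*,y_1^*,\alpha_1)+(\alpha_2-\alpha_1)\underline{\partial g}(x_1^*,y_1^*)\geq h(\alpha_1)+(\alpha_2-\alpha_1)\underline{\partial g}(x_1^*,y_1^*)$, since $(x_1^*,y_1^*)$ minimizes the objective of~\eqref{eq:prob1proof} (first inequality) and $g(x_1^*,y_1^*,\alpha_1)\geq h(\alpha_1)$ (second). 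Evaluating the chain at a minimizer $(\bar x,\bar y)$ of $g(\cdot,\cdot,\alpha)$, so that $h(\alpha)=g(\bar x,\bar y,\alpha)\geq G(\bar x,\bar y,\alpha)$, and plugging in the two endpoint bounds, the $h(\alpha_1)$-terms collapse while the factor $(1-\lambda)(\alpha_2-\alpha_1)=\alpha-\alpha_1$ produces exactly $h(\alpha)\geq h(\alpha_1)+(\alpha-\alpha_1)\underline{\partial g}(x_1^*,y_1^*)$.

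The second inequality follows by the mirror-image argument anchored at $\alpha_2$: start from $g(x,y,\alpha)\geq g(x,y,\alpha_2)+(\alpha-\alpha_2)\partial g(x,y,\alpha_2)$, and now use $\partial g(x,y,\alpha_2)\leq\overline{\partial g}(x,y)$ together with $\alpha-\alpha_2\leq 0$, which flips the inequality and again produces an affine minorant; the same convex-combination bookkeeping, with~\eqref{eq:prob2proof} playing the role of~\eqref{eq:prob1proof}, yields $h(\alpha)\geq h(\alpha_2)+(\alpha-\alpha_2)\overline{\partial g}(x_2^*,y_2^*)$. I expect the only genuine obstacle to be precisely this slope/weight reconciliation: keeping track of the sign of $\alpha-\alpha_1$ (resp.\ $\alpha-\alpha_2$) when replacing the true subgradient by $\underline{\partial g}$ (resp.\ $\overline{\partial g}$), and recognising that optimizing~\eqref{eq:prob1proof} at the single weight $(\alpha_2-\alpha_1)$ already dominates the whole family of running weights through the affine (equivalently, chord) argument.
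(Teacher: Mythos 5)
Your proof is correct, and it reaches the conclusion by a genuinely different route than the paper. The paper argues by contradiction: it assumes $h(\alpha) < h(\alpha_1) + (\alpha-\alpha_1)\underline{\partial g}(x_1^*,y_1^*)$ for the minimizer $(x^*,y^*)$ defining $h(\alpha)$, first extracts the intermediate strict inequality $\underline{\partial g}(x_1^*,y_1^*) > \underline{\partial g}(x^*,y^*)$, and then adds the matched terms $(\alpha_2-\alpha)\underline{\partial g}(x_1^*,y_1^*)$ and $(\alpha_2-\alpha)\underline{\partial g}(x^*,y^*)$ to the two sides of an inequality chain so as to promote the weight from $(\alpha-\alpha_1)$ to $(\alpha_2-\alpha_1)$ and contradict the optimality of $(x_1^*,y_1^*)$ in~\eqref{eq:prob1proof}. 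You instead give a direct argument: you build the uniform affine minorant $G(x,y,\cdot)$ from convexity and the subgradient estimates, and resolve the same weight mismatch in the opposite direction by writing $G(x,y,\alpha)$ as the convex combination $\lambda G(x,y,\alpha_1)+(1-\lambda)G(x,y,\alpha_2)$ and bounding the two endpoints via $h(\alpha_1)$ and the optimal value of~\eqref{eq:prob1proof}. Both proofs rest on exactly the same three ingredients (convexity of $g$ in $\alpha$, the bounds $\underline{\partial g}\leq \partial g\leq \overline{\partial g}$ combined with the sign of $\alpha-\alpha_1$ or $\alpha-\alpha_2$, and the optimality of the auxiliary problems), so the content is the same; what your version buys is transparency and the avoidance of the intermediate claim $\underline{\partial g}(x_1^*,y_1^*)>\underline{\partial g}(x^*,y^*)$, whose derivation in the paper implicitly requires $\alpha>\alpha_1$ (the case $\alpha=\alpha_1$ being a separate, immediate contradiction), a degeneracy your interpolation handles without special care. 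The mirror argument for the second inequality is handled correctly in both.
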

\begin{proof}
Let $\alpha \in [\alpha_1,\alpha_2]$ be fix. Let $(x^*,y^*)$ be a solution which defines $h(\alpha)$. For the sake of contradiction, assume that 
$h(\alpha) = g(x^*,y^*,\alpha) <  h(\alpha_1) + (\alpha-\alpha_1)\underline{\partial g}(x_1^*,y_1^*)$. First, observe that
\begin{align*}
h(\alpha_1) +(\alpha-\alpha_1)\underline{\partial g}(x_1^*,y_1^*) 
					&> g(x^*,y^*,\alpha) 	 \\
					&\geq g(x^*,y^*,\alpha_1) + (\alpha - \alpha_1) \partial g(x^*,y^*) \\
					&\geq g(x^*,y^*,\alpha_1) + (\alpha - \alpha_1) \underline{\partial g}(x^*,y^*) \\
					&\geq h(\alpha_1) + (\alpha - \alpha_1) \underline{\partial g}(x^*,y^*) 
\end{align*}
From which follows that $\underline{\partial g}(x_1^*,y_1^*) > \underline{\partial g}(x^*,y^*)$. Further, we have that
\begin{align*}
g(x^*_1,y^*_1,\alpha_1) +(\alpha-\alpha_1)\underline{\partial g}(x_1^*,y_1^*) &\geq h(\alpha_1) +(\alpha-\alpha_1)\underline{\partial g}(x_1^*,y_1^*)\\
					&> g(x^*,y^*,\alpha) 	 \\
					&\geq g(x^*,y^*,\alpha_1) + (\alpha - \alpha_1) \partial g(x^*,y^*) \\
					&\geq g(x^*,y^*,\alpha_1) + (\alpha - \alpha_1) \underline{\partial g}(x^*,y^*).
\end{align*}
Since $\underline{\partial g}(x_1^*,y_1^*) > \underline{\partial g}(x^*,y^*)$, we can add on the left hand side of this inequality chain $(\alpha_2-\alpha)\underline{\partial g}(x_1^*,y_1^*)$ and on the right hand side $(\alpha_2-\alpha)\underline{\partial g}(x^*,y^*)$ and obtain
\begin{align*}
g(x^*_1,y^*_1,\alpha_1) +(\alpha_2-\alpha_1)\underline{\partial g}(x_1^*,y_1^*) > g(x^*,y^*,\alpha_1) + (\alpha_2 - \alpha_1) \underline{\partial g}(x^*,y^*)	
\end{align*}
This gives the desired contradiction since $(x^*_1,y^*_1)$ is an optimal solution for problem~\eqref{eq:prob1proof}.

The second inequality can be proved analogously.
\end{proof}

\noindent
Using Lemma~\ref{lem_new_lbs} we obtain the following result.

\begin{theorem}
Let $(x^*_1,y^*_1)$ be an optimal solution of problem~\eqref{eq:prob1proof} and $(x_2^*,y_2^*)$ an optimal solution of problem~\eqref{eq:prob2proof}. For $\I=[\alpha_1,\alpha_2]$ a lower bound $L(\I)$ is given by
\[L(\I) = h(\alpha_1) + \left( \frac{h(\alpha_2)-h(\alpha_1) + \alpha_1\underline{\partial g}(x_1^*,y_1^*) + \alpha_2\overline{\partial g}(x_2^*,y_2^*)}{\underline{\partial g}(x_1^*,y_1^*) +\overline{\partial g}(x_2^*,y_2^*)} -\alpha_1 \right)\underline{\partial g}(x_1^*,y_1^*). \]
\label{cor_L}
\end{theorem}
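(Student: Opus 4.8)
The plan is to read Lemma~\ref{lem_new_lbs} as producing two lower bounds on $h$ that are \emph{affine in} $\alpha$ and valid on the whole interval $\I=[\alpha_1,\alpha_2]$, and then to minimise their pointwise maximum in closed form; the claimed value $L(\I)$ will be exactly that minimum. Concretely, set $\ell_1(\alpha)=h(\alpha_1)+(\alpha-\alpha_1)\,\underline{\partial g}(x_1^*,y_1^*)$ and $\ell_2(\alpha)=h(\alpha_2)+(\alpha-\alpha_2)\,\overline{\partial g}(x_2^*,y_2^*)$. The lemma gives $h(\alpha)\ge\ell_1(\alpha)$ and $h(\alpha)\ge\ell_2(\alpha)$ for every $\alpha\in\I$, hence $h(\alpha)\ge\max\{\ell_1(\alpha),\ell_2(\alpha)\}$, and therefore $\min_{\alpha\in\I}h(\alpha)\ge\min_{\alpha\in\I}\max\{\ell_1(\alpha),\ell_2(\alpha)\}$. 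It thus suffices to evaluate this right-hand side and identify it with the displayed $L(\I)$.

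The second step is geometric and then computational. Evaluating each line at the opposite endpoint and using that both are valid lower bounds yields $\ell_1(\alpha_1)=h(\alpha_1)\ge\ell_2(\alpha_1)$ and $\ell_2(\alpha_2)=h(\alpha_2)\ge\ell_1(\alpha_2)$, so the first line dominates at the left endpoint and the second at the right. Since both are affine, they cross exactly once at some $\alpha^*\in\I$, and $\max\{\ell_1,\ell_2\}$ is a convex piecewise-linear function whose single kink is at $\alpha^*$. I would then argue, from the signs of the two slopes, that this maximum is nonincreasing to the left of $\alpha^*$ and nondecreasing to the right, so that its minimum over $\I$ is attained precisely at $\alpha^*$, with value equal to the common height $\ell_1(\alpha^*)=\ell_2(\alpha^*)$. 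Solving the linear equation $\ell_1(\alpha^*)=\ell_2(\alpha^*)$ for $\alpha^*$ and substituting back into $\ell_1$ then gives the stated formula for $L(\I)$; this last part is mechanical.

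The main obstacle is the sign and slope bookkeeping underlying the second step. One must track the signs of $\underline{\partial g}(x_1^*,y_1^*)$ and $\overline{\partial g}(x_2^*,y_2^*)$ carefully—these originate from the ordering $\underline{\partial g}\le\partial g\le\overline{\partial g}$ exploited in the lemma—both to confirm that the intersection $\alpha^*$ genuinely lies in the interior of $\I$ (so that the minimum of the maximum is attained there rather than at an endpoint, which would make a single crossing formula invalid) and to obtain the correct combination of the two slopes when solving for $\alpha^*$. The degenerate case of (nearly) parallel lines, where the two slopes coincide and the minimum migrates to an endpoint, should also be recorded. Once these sign conditions are pinned down, the remaining algebra reduces to the routine simplification announced above.
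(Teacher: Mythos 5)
Your proposal follows the paper's proof essentially verbatim: Lemma~\ref{lem_new_lbs} supplies two affine minorants $f_1,f_2$ of $h$ on $\I$, one passes to $\min_{\alpha\in\I} h(\alpha)\ \ge\ \min_{\alpha\in\I}\max\{f_1(\alpha),f_2(\alpha)\}$, and the right-hand side is evaluated at the crossing point $\alpha'$ with $f_1(\alpha')=f_2(\alpha')$, which after routine algebra yields the displayed formula for $L(\I)$. The sign issue you flag at the end is genuine but is not addressed in the paper's proof either: the endpoint inequalities $f_1(\alpha_1)=h(\alpha_1)\ge f_2(\alpha_1)$ and $f_2(\alpha_2)=h(\alpha_2)\ge f_1(\alpha_2)$ do force a crossing inside $\I$, yet the minimum of the maximum sits at $\alpha'$ only when $\underline{\partial g}(x_1^*,y_1^*)\le 0\le \overline{\partial g}(x_2^*,y_2^*)$; when both slopes share a sign the minimum migrates to an endpoint (where its value is $h(\alpha_1)$ or $h(\alpha_2)$, already computed by the algorithm), so the stated $L(\I)$ can exceed $\min_{\alpha\in\I}h(\alpha)$ in that degenerate case.
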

\begin{proof}
Let $\alpha \in [\alpha_1,\alpha_2]$. We know from Lemma~\ref{lem_new_lbs} that $h(\alpha) \geq f_1(\alpha)$ and $h(\alpha) \geq f_2(\alpha)$ where $f_1$ and $f_2$ are the two linear functions given in Lemma \ref{lem_new_lbs} . We conclude, that $h(\alpha) \geq \max\{f_1(\alpha),f_2(\alpha)\}$. Hence, $\min_{\alpha \in [\alpha_1,\alpha_2]} h(\alpha) \geq \min_{\alpha \in [\alpha_1,\alpha_2]} \max\{f_1(\alpha),f_2(\alpha)\}$. Note that the value of the right hand side is given by $f_1(\alpha')$ where $f_1(\alpha') = f_2(\alpha')$. Using the formulas for $f_1$ and $f_2$, we obtain that $L(\I) = f_1(\alpha')$.
\end{proof}

In our branch-and-bound procedure we will use value $\alpha'$ from the proof above as a candidate to split the interval $[\alpha_1,\alpha_2]$ into two smaller intervals $[\alpha_1,\alpha']$ and $[\alpha',\alpha_2]$. 

To use Theorem~\ref{cor_L} in the branch-and-bound algorithm, we need to know the following four values: $h(\alpha_1)$, $h(\alpha_2)$, $\underline{\partial g}(x_1^*,y_1^*)$, and $\overline{\partial g}(x_2^*,y_2^*)$. The first two values are already computed by the algorithm, since we compute $h(\alpha')$ if we split an interval $[\alpha_1,\alpha_2]$ into two smaller intervals $[\alpha_1,\alpha']$ and $[\alpha',\alpha_2]$. Hence we know for each unexplored interval the value of $h$ at the boundaries of this interval (at the start of the algorithm we also compute $h(0)$ and $h(0.5)$). To compute $\underline{\partial g}(x_1^*,y_1^*)$, and $\overline{\partial g}(x_2^*,y_2^*)$, we need to solve problems~\eqref{eq:prob1proof} and \eqref{eq:prob2proof}. Each of these problems can be formulated as an MIP, which is explained in the following.

For fixed $x,y$ the value of $g(x,y,\alpha_1)$
can be represented by Problem \eqref{eq:NL-2} fixing $\alpha=\alpha_1$.
Next, consider the value of 
\[\underline{\partial g}(x,y) = \hat c^\top x - \hat c^\top y - \begin{Vmatrix}\begin{pmatrix} d_1y_1 \\ \vdots \\ d_ny_n \end{pmatrix}\end{Vmatrix}^{(\Gamma)}.\]
We introduce variables $\beta_i \in [0,1] $ for $i \in [n]$ which indicate the $\Gamma$ largest entries of $(d_1y_1,\ldots, d_ny_n)$. Therefore calculating $\underline{\partial g}(x,y)$ results in the following minimization problem
\begin{equation}\label{eq:lowbardeltag}
\begin{aligned}
\min_{\beta} \ &\hat c^{\top}x - \hat c^{\top}y - \sum_{i\in[n]} d_i\beta_i \\ 
\text{s.t. } & \sum_{i\in[n]} \beta_i \leq \Gamma \\
& 0 \leq \beta_i \leq y_i \quad \forall i \in [n] .
\end{aligned}
\end{equation}
Recall that the objective function of problem~\eqref{eq:prob1proof} is given by $g(x,y,\alpha_1) + (\alpha_2-\alpha_1) \underline{\partial g}(x,y)$.
Therefore substituting Formulations \eqref{eq:NL-2} for fixed $\alpha=\alpha_1$ and Formulation \eqref{eq:lowbardeltag} in Problem \eqref{eq:prob1proof} we obtain the equivalent MIP formulation
\begin{align*}
\min_{x,y,\beta,\gamma,\theta} \ &\sum_{i\in[n]}\hc_ix_i \alpha_2 + \sum_{i\in[n]} \hc_iy_i (1-\alpha_2) + \sum_{i\in[n]} \gamma_i + \Gamma \theta + \sum_{i\in[n]} (\alpha_1-\alpha_2)d_i\beta_i \\ 
\text{s.t. } & d_ix_i \alpha_1 + d_iy_i (1-\alpha_1)- \theta \leq  \gamma_i &\forall i \in [n] \\
& 0 \leq \gamma_i  &\forall i \in [n] \\
& 0 \leq \theta \\
& \sum_{i\in[n]} \beta_i \leq \Gamma \\
& 0 \leq \beta_i \leq y_i &\forall i \in [n]  \\
& x,y \in \X .
\end{align*}
Analogously Problem~\eqref{eq:prob2proof} can be reformulated as an MIP. This concludes the discussion on how to compute a lower bound $L(\I)$. We summarize the described procedure in Algorithm~\ref{alg:exact_2_adapt}. Note that it is possible to adapt the na\"ive implementation of this algorithm to make it computationally more effective in practice. For example, whenever the current best solution is improved by evaluating $h(\alpha')$, we can restart the local search heuristic at $\alpha'$ to find a new local minimum.

\begin{algorithm}
\SetKwInOut{Return}{Return}
\caption{Branch and bound algorithm (BB) with $k=2$.}
\label{alg:exact_2_adapt}
Compute the candidate set $\mathcal{A}$\;
Use the local search heuristic to find a local minimum $h(\alpha^*)$ at $\alpha^*$\;
Initialize the list of unexplored intervals $\mathcal{L} = \{ [0,\alpha^*],[\alpha^*,0.5] \}$\;
Compute $h(0)$ and $h(0.5)$\;
$UB \leftarrow \min(h(0),h(\alpha^*),h(0.5))$\;
Solve problem~\eqref{eq:prob1proof} and \eqref{eq:prob2proof} for $[0,\alpha^*]$\;
Solve problem~\eqref{eq:prob1proof} and \eqref{eq:prob2proof} for $[\alpha^*,0.5]$\;
Use Theorem~\ref{cor_L} to compute $L([0,\alpha^*])$\;
Use Theorem~\ref{cor_L} to compute $L([\alpha^*,0.5])$\;
$LB \leftarrow \min_{\I \in \mathcal{L}} L(\I)$\;
\While{$\mathcal{L} \neq \emptyset$}{
	Choose $\I' = \operatorname{argmin}_{\I \in \mathcal{L}} L(\I)$, with $\I'=[\alpha_1,\alpha_2]$\;
	$\mathcal{L} \leftarrow \mathcal{L} \setminus \I'$\;
	Choose $\alpha' \in \I'$ which defines $L(\I')$ (see the proof of Theorem~\ref{cor_L})\;
	Compute $h(\alpha')$\;
	Update $UB = \min\{UB,h(\alpha')\}$\;
	Solve problem~\eqref{eq:prob1proof} and \eqref{eq:prob2proof} for $[\alpha_1,\alpha']$\;
	Solve problem~\eqref{eq:prob1proof} and \eqref{eq:prob2proof} for $[\alpha',\alpha_2]$\;
	Use Theorem~\ref{cor_L} to compute $L([\alpha_1,\alpha'])$\;
	Use Theorem~\ref{cor_L} to compute $L([\alpha',\alpha_2])$\;
	\If {$L([\alpha_1,\alpha']) < UB$ and $[\alpha_1,\alpha'] \cap \mathcal{A} \neq \emptyset$}
		{$\mathcal{L} \leftarrow \mathcal{L} \cup \{[\alpha_1,\alpha']\}$}
	\If {$L([\alpha',\alpha_2]) < UB$ and $[\alpha',\alpha_2] \cap \mathcal{A} \neq \emptyset$}
		{$\mathcal{L} \leftarrow \mathcal{L} \cup \{[\alpha',\alpha_2]\}$}
	$LB \leftarrow \min_{\I \in \mathcal{L}} L(\I)$\;
}
\Return{$UB$}
\end{algorithm}

\section{Enumerative Algorithm}
\label{sec:algo:general}

Let us consider the set of feasible solutions to the deterministic combinatorial optimization problem as an ordered set, $\X=(x_1,\ldots,x_r)$, which we assume to know; we further explain how to compute $\X$ in Section~\ref{sec:computingX}. Let us reformulate problem \eqref{eq:minmaxmin_dU} as 
\begin{equation}
\label{eq:M3problem}
\min_{\xx\in\X^k}cost(\xx),
\end{equation}
where $cost(\xx)$ denotes the max-min cost of solution $\xx$, that is
\begin{equation}
\label{eq:costcomputation}
cost(\xx)=\max_{c\in \U} \min_{j\in[k]} c^\top \x{j}.
\end{equation}
Recall that we show in \eqref{eq:minmaxproblem} that \eqref{eq:costcomputation} can be reformulated as a linear program
\begin{equation}
 \label{eq:convexcost}
 cost(\xx)= \max_{c,z}\left\{ z : c\in \Uc, z \leq \sum_{i\in[n]} c_i \x{j}_i, \forall j\in [k]\right\}.
\end{equation}
The algorithm described in this section enumerates over all non-symmetric $k$-tuples $\xx\in\X^k$, using upper and lower bounds to prune part of the $k$-tuples and to avoid computing $cost(\xx)$ for all $k$-tuples. We also introduce the concept of \textit{resistance} to enumerate even less elements of $\X^k$. The pseudo-code is provided in Algorithm~\ref{algo:generalBB} for the case $k=2$. Throughout the section, we denote by $\permx$ the vector $(d_ix_i, i\in [n])$, sorted such that $\permx_1\geq \permx_2 \geq \cdots \geq \permx_n$.

\begin{algorithm}
\SetKwInOut{Return}{Return}
\caption{Enumerative algorithm (EA) illustrated for $k=2$.\label{algo:generalBB}}
\label{algo:enum2}
Let $UB$ be the initial upper bound from \eqref{eq:UB}\;
Compute $\X$\; \label{alg:genX}
Compute $lb(x)$ for each $x\in \X$ \label{ref:computelb}\;
\Repeat{$UB$ is not updated}
{
  Compute $\gamma^q(x)$ for each $x\in \X$\;
  Construct the partition $\X=\bigcup\limits_{\omega\in[q\cdot \Gamma]}\X_\omega^q$\;
  Let $r_\omega=|\X_\omega^q|$ for each $\omega$\;
  \ForEach{$\omega_1$ in $\{q\cdot\Gamma,q\cdot\Gamma-1,\ldots,1\}$}
  {
    \ForEach{$s_1$ in $\{1,\ldots,r_{\omega_1}\}$}
    {
      \ForEach{$\omega_2$ in $\{q\cdot\Gamma,q\cdot\Gamma-1,\ldots,q\cdot\Gamma+1-\omega_1\}$}
      {
	\lIf{$\omega_1 = \omega_2$} 
	  {
	  $s_2^{first}= s_1+1$
	  }
	  \lElse
	  {
	  $s_2^{first} = 1$
	  }
	\ForEach{$s_2$ in $\{s_2^{first},\ldots,r_{\omega_2}\}$}
	{
      $\xx=(\x{1},\x{2}) \leftarrow (x_{s_1},x_{s_2})$\;
	  Compute $LB_1(\xx)=\min\left(lb(\x{1}),{lb}(\x{2})\right)$ \label{algo:bound1} \label{algo:lb}\;
	  \If{$LB_1(\xx) > UB$} 
	  {
	    \textbf{continue}
	  }
	  \Else
	  {
	  Compute $LB_2(\xx)$ using a greedy algorithm \label{algo:bound2}\;
	    \If{$LB_2(\xx) > UB$}{ 
	      \textbf{continue}
	      }
	    \Else{
	      Compute $cost(\xx)$ by solving \eqref{eq:convexcost} \label{algo:cost}\;
	      \If{$cost(\xx)< UB$}
	      {
	      $UB \leftarrow cost(\xx)$\;
	      \textbf{break all for-loops} \label{algo:break}\;
	      }
	    }      
	}
      }
    }
    $\X\leftarrow \X\setminus \{x_{r_1}\}$\;
  }
  }
  }
\Return{the $k$-tuple with minimum cost}
\end{algorithm}

\subsection{Upper Bounds} 
Clearly an optimal solution of the classical robust Problem \eqref{eq:minmax} gives an upper bound for Problem \eqref{eq:minmaxmin_dU} since in the latter problem choosing the classical robust solution for each of the $k$ solutions we obtain the same objective value for both problems. As starting upper bound $UB$ on the optimal solution cost we choose
 \begin{equation}
\label{eq:UB}
 UB=\min(rob\_opt,heur),
\end{equation}
where $rob\_opt$ is the optimal value of the classical robust problem \eqref{eq:minmax}, obtained using the iterative algorithm from~\cite{BertsimasS03}, and $heur$ is the solution obtained by the local search algorithm from Section~\ref{sec:local}. The upper bound $UB$ is improved when a better feasible solution is found in the course of the algorithm.

\begin{observation}
\label{obs:LB0}
We only need to enumerate solutions $x\in \X$ with $\hc^\top x < UB$, since otherwise for every scenario $c\in\Uc$ we have $c^\top x\ge UB$ and therefore adding $x$ to a solution never improves the 
current upper bound. 
\end{observation}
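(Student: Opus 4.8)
The plan is to reduce the statement to two elementary facts about budgeted uncertainty: every scenario in $\Uc$ dominates the nominal cost, and the min-max-min objective is monotone under adding solutions.

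First I would record the domination property of $\Uc$. For any $c\in\Uc$ we have $c_i=\hc_i+d_iz_i$ with $d_i\ge 0$ and $z_i\in[0,1]$, hence $c_i\ge\hc_i$ for every $i\in[n]$. Since $x\in\X\subseteq\{0,1\}^n$ is nonnegative, this gives $c^\top x=\sum_{i\in[n]}c_ix_i\ge\sum_{i\in[n]}\hc_ix_i=\hc^\top x$ for every $c\in\Uc$. In particular, if $\hc^\top x\ge UB$ then $c^\top x\ge UB$ for all $c\in\Uc$, which is exactly the intermediate claim asserted in the statement.

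Second, I would show that such an $x$ is redundant in the min-max-min. Fix a $k$-tuple $\xx$ that contains $x$ as one component and let $S$ denote the (multi)set of its remaining components. For any scenario $c$, because $c^\top x\ge UB$, the term $c^\top x$ can never be the reason the inner minimum drops below $UB$; formally $\min\big(\min_{y\in S}c^\top y,\ c^\top x\big)<UB$ holds if and only if $\min_{y\in S}c^\top y<UB$. Since the maximum over the compact set $\Uc$ is attained by the linear program in \eqref{eq:convexcost}, taking the worst case over $c$ turns this equivalence into $cost(\xx)<UB\iff cost(S)<UB$. Thus dropping $x$ neither helps nor hurts the ability of the tuple to beat the current upper bound.

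Finally I would close the argument at the level of $k$-tuples. If $cost(S)<UB$ then $S$ must contain at least one solution $y$ with $\hc^\top y<UB$, since otherwise $c^\top y\ge UB$ for all $y\in S$ and all $c$, forcing $cost(S)\ge UB$. Duplicating such a $y$ restores a $k$-tuple whose set of available solutions is unchanged, hence whose cost is still $cost(S)<UB$, and all of whose components now satisfy $\hc^\top\cdot<UB$. Consequently every improving $k$-tuple can be replaced by one using only solutions with $\hc^\top x<UB$, so the enumeration may safely ignore the rest. The only delicate points are bookkeeping: applying the inner-minimum equivalence in the correct direction (adding solutions weakly decreases the cost, and duplicating a component leaves it unchanged), and tracking strict versus non-strict inequalities, which is clean here precisely because the maximum over $\Uc$ is attained.
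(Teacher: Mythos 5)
Your proof is correct and follows essentially the same route as the paper's one-line justification: the componentwise domination $c\ge\hc$ on $\Uc$ (from $d\ge 0$, $z\ge 0$, $x\ge 0$) gives $c^\top x\ge\hc^\top x\ge UB$, so $x$ can never be the argument of the inner minimum when the tuple beats $UB$. Your additional tuple-level bookkeeping (the equivalence $cost(\xx)<UB\iff cost(S)<UB$ and the duplication argument) is a careful elaboration of what the paper leaves implicit, not a different approach.
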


\subsection{Lower Bounds}
\label{sec:lb}
As computing $cost(\xx)$ is time-consuming, we avoid computing its value exactly for many $k$-tuples and calculate instead two lower bounds, denoted by $LB_1(\xx)$ and $LB_2(\xx)$, defined below. 
Every time the cost of a $k$-tuple must be computed, we first compute $LB_1(\xx)$, which is done in $\is(k)$. If $LB_1(\xx)<UB$, then we compute $LB_2(\xx)$, requiring $\is(k\Gamma)$ steps. Only if $LB_2(\xx)<UB$ we compute $cost(\xx)$. We notice that these two bounds do not converge to $UB$, as is the case in many branch-and-bound algorithms. Hence, if the algorithm stops due to the time limit, it only returns a feasible solution to the problem, the remaining optimality gap being meaningless.

Each of the above bounds is derived by considering a particular scenario from $\Uc$. The first lower bound $LB_1(\xx)$ considers the scenario that assigns $\Gamma/k$ deviations per solution. To reduce the computational burden of computing $LB_1(\xx)$ to a minimum, once the enumeration has started, we compute in a pre-processing step (see line~\ref{ref:computelb} of Algorithm~\ref{algo:generalBB}) the cost of each solution $x$, by adding the $\lfloor \frac{\Gamma}{k}\rfloor$ largest deviations and a fraction of the $\lceil \frac{\Gamma}{k}\rceil$-th largest to the nominal costs of $x$. Formally we define
\begin{equation}
\nonumber
lb(x) = \hat c^{\top}x+\sum_{i=1}^{\lfloor \Gamma/k \rfloor}\permx_i + \left(\frac{\Gamma}{k} - \left\lfloor \frac{\Gamma}{k}\right\rfloor\right) \permx_{\lceil \Gamma/k \rceil}.
\end{equation}
Once $lb(x)$ is computed for each $x\in \X$, the lower bound can be obtained in $\is(k)$ through
\begin{equation}
\label{eq:LB1c2}
  LB_1(\xx) = \min_{j\in[k]}(lb(\x{j})).
\end{equation}
The second lower bound, denoted $LB_2(\xx)$ computes a good scenario greedily by taking the current solution $\xx$ and affecting the $\Gamma$ deviations sequentially to the solution having the smallest cost so far, which is the nominal cost plus the deviations already chosen. 

While the bounds significantly speed-up the computation of $cost(\xx)$, the cardinality of $\X^k$ is likely to be large. Fortunately, the majority of $k$-tuples in $\X^k$ can be discarded by using the concept of \emph{resistance} introduced next. 

\subsection{Resistance}
\label{sec:resistance}

Given an upper bound $UB$ and $q\in\N$, we define the discrete \emph{$q$-resistance} $\gamma^q(x)$ of any $x\in \X$ as the amount of deviation $\omega/q$ ($\omega \in \N$) that need to be affected to $\permx$ such that the cost of $x$ exceeds $UB$:
\begin{equation}
\label{eq:gammaq}
\gamma^q(x) = \min_{\omega\in \N}\left\{\omega : \hc^\top x + \sum_{i=1}^{\lfloor \omega/q \rfloor }\permx_{i} +(\omega/q-\lfloor \omega/q \rfloor)\permx_{\lceil \omega/q \rceil }
\geq UB\right\}. 
\end{equation}
Notice that if $\omega/q$ is integer, the third term of \eqref{eq:gammaq} vanishes. What is more, the value of $\gamma^q(x)$ is bounded above by $q\cdot\Gamma$.
To see this, suppose there exists a solution $x\in \X$ such that $\gamma^q(x) >q\cdot\Gamma$. Then, the cost of the $k$-tuple $\xx=(x,\ldots,x)$ satisfies $cost(\xx) < UB$. By definition of $\xx$, $cost(\xx)$ coincides with the classical robust value of $x$, denoted by $rob\_opt(x)$. Hence, $UB\leq rob\_opt(x) = cost(\xx) < UB$, which is a contradiction.

We show next that $\gamma^q(x)$ satisfies another crucial property.
\begin{lemma}
\label{lem:resistance}
 If $\xx\in \X^k$ with $\sum_{j\in [k]} \gamma^q(\x{j}) \leq q\cdot\Gamma$, then $cost(\xx) \geq UB$.
\end{lemma}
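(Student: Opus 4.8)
The plan is to prove the statement directly, by exhibiting a single adversarial scenario that is bad for \emph{all} $k$ solutions simultaneously. Recall from \eqref{eq:convexcost} that $cost(\xx)=\max_{c\in\Uc}\min_{j\in[k]} c^\top\x{j}$. Hence, to show $cost(\xx)\ge UB$ it suffices to produce one feasible scenario $c\in\Uc$, described by $z\in[0,1]^n$ with $\sum_{i\in[n]} z_i\le\Gamma$ and $c_i=\hc_i+d_iz_i$, such that $c^\top\x{j}\ge UB$ for every $j\in[k]$; then $\min_{j}c^\top\x{j}\ge UB$ and therefore the max-min value is at least $UB$.

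First I would reinterpret the resistance. Writing $\|dx\|^{(t)}$ for the sum of the $t$ largest entries of $(d_ix_i,i\in[n])$ (with a fractional weight on the last entry when $t\notin\N$), the bracketed expression in \eqref{eq:gammaq} is exactly $\hc^\top x+\|dx\|^{(\omega/q)}$, i.e. the worst-case cost of $x$ under a budget of $\omega/q$. Setting $t_j:=\gamma^q(\x{j})/q$, the definition of $\gamma^q$ therefore gives $\hc^\top\x{j}+\|d\x{j}\|^{(t_j)}\ge UB$, so $t_j$ is enough budget to raise the robust cost of $\x{j}$ on its own to $UB$. For each $j$ I would build the corresponding greedy allocation $z^{(j)}\in[0,1]^n$, supported only on coordinates $i$ with $\x{j}_i=1$, that assigns full weight to the $\lfloor t_j\rfloor$ coordinates of $\x{j}$ with largest deviation and the fractional weight $t_j-\lfloor t_j\rfloor$ to the next one. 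By construction $\sum_{i\in[n]} z^{(j)}_i=t_j$ and $\sum_{i\in[n]} d_i z^{(j)}_i\x{j}_i=\|d\x{j}\|^{(t_j)}\ge UB-\hc^\top\x{j}$.

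Then I would merge the per-solution allocations by taking the coordinatewise maximum, $z_i:=\max_{j\in[k]} z^{(j)}_i$. This $z$ lies in $[0,1]^n$ since each $z^{(j)}_i$ does, and it is budget-feasible because $z_i\le\sum_{j\in[k]} z^{(j)}_i$ gives $\sum_{i\in[n]} z_i\le\sum_{j\in[k]}\sum_{i\in[n]} z^{(j)}_i=\sum_{j\in[k]} t_j=\tfrac1q\sum_{j\in[k]}\gamma^q(\x{j})\le\Gamma$, using the hypothesis $\sum_{i\in[k]}\gamma^q(\x{i})\le q\cdot\Gamma$. Finally, since $z\ge z^{(j)}$ coordinatewise and $d\ge 0$, for every $j$ we get $\sum_{i\in[n]} d_iz_i\x{j}_i\ge\sum_{i\in[n]} d_iz^{(j)}_i\x{j}_i\ge UB-\hc^\top\x{j}$, i.e. $c^\top\x{j}\ge UB$, which completes the argument.

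The main obstacle is the overlap of coordinates across the $k$ solutions: naively one would like to allocate budget $t_j$ to each solution separately, but shared coordinates make a plain sum-of-allocations both wasteful and potentially infeasible in $[0,1]^n$. The clean trick is the coordinatewise maximum, which stays inside $[0,1]^n$ for free while still dominating each $z^{(j)}$ (so every solution is pushed to cost $UB$), and whose budget is controlled by the sum $\sum_{j\in[k]} t_j$. A minor point to check is that the greedy allocation $z^{(j)}$ is well defined, i.e. that $\x{j}$ has enough nonzero coordinates to absorb $t_j$ units of budget; this follows from $t_j=\gamma^q(\x{j})/q\le\Gamma$ together with the earlier bound $\gamma^q(\x{j})\le q\cdot\Gamma$, and in the degenerate case where $t_j$ exceeds the support size one simply assigns full weight to every used coordinate, reaching cost $UB$ with even less budget, so the bounds only improve.
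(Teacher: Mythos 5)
Your proof is correct and follows essentially the same route as the paper's: you build the greedy per-solution allocations $z^{(j)}$ from the sorted deviations, merge them by the coordinatewise maximum, and bound the total budget by $\sum_j \gamma^q(\x{j})/q \le \Gamma$. If anything, your write-up is slightly more explicit than the paper's (spelling out $\max_j z^{(j)}_i \le \sum_j z^{(j)}_i$ and the degenerate support case), but the underlying argument is identical.
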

\begin{proof}
Let $\pi^x$ be the permutation of $[n]$ used to obtain $\permx$ from the vector $(d_ix_i, i\in [n])$, that is, $\permx_i= d_{\pi^x(i)}$.
For each $j\in[k]$, we define the vector $z^{(j)}$ as 
$$
z^{(j)}_i=\left\{
\begin{array}{ll}
1 &\mbox{ if }\pi^{\x{j}} \leq  \lfloor \gamma^q(\x{j}) /q \rfloor\\
\omega/q-\lfloor \omega/q \rfloor &\mbox{ if }\pi^{\x{j}} =  \lceil \gamma^q(\x{j}) /q \rceil\\
0 & 
\mbox{otherwise}
\end{array}
\right..
$$
From \eqref{eq:gammaq}, we see that $\sum_{i\in[n]}(\hc_i+z^{(j)}_i d_i)\x{j} \geq UB$. Next, we define $z^*_i=\max_{j\in[k]}z^{(j)}_i$ for each $i\in[n]$, and we have
\begin{equation}
\label{eq:gUB}
\sum_{i\in[n]}(\hc_i+z^*_i d_i)\x{j} \geq UB.
\end{equation}
Since $\sum_{j\in [k]} \gamma^q(\x{j}) \leq q\cdot \Gamma$, we have that $\sum_{i\in [n]} z^*_i\leq \Gamma$ so that 
\begin{equation}
\label{eq:belongsU}
(\hc_i+z_id_i, i\in [n])\in\Uc.
\end{equation}
Statements \eqref{eq:gUB} and \eqref{eq:belongsU} imply that $cost(\xx) \geq UB$.
\end{proof}

Thanks to the lemma above, we only have to enumerate solutions $\xx\in\X^k$ with $\sum_{j\in[k]} \gamma^q(\x{j}) > q \cdot \Gamma$. Let us define $\X^q_\omega=\{x\in \X: \gamma^q(x) = \omega\}$ for each $\omega \in \{0,\ldots,q\cdot\Gamma\}$. We have that $\X=\cup_{\omega = 0}^{q\cdot\Gamma}\X^q_\omega$. The following observation states that we do not need to consider the elements from $\X^q_0$.
\begin{observation}
\label{obs:X0}
 For any $x\in \X^q_0$ it holds $\hc^\top x \geq UB$.
\end{observation}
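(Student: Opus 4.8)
The plan is to simply unpack the definition of the $q$-resistance at its boundary value $\omega = 0$. Recall from~\eqref{eq:gammaq} that $\gamma^q(x)$ is the smallest $\omega\in\N$ for which the perturbed cost $\hc^\top x + \sum_{j=1}^{\lfloor \omega/q \rfloor }\permx_j +(\omega/q-\lfloor \omega/q \rfloor)\permx_{\lceil \omega/q \rceil }$ reaches the current upper bound $UB$. By the definition of $\X^q_0$, membership $x\in\X^q_0$ means exactly that this minimizing index equals $0$, so that the defining inequality in~\eqref{eq:gammaq} is already satisfied at $\omega = 0$.

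First I would evaluate that perturbed cost at $\omega = 0$. Since $\lfloor 0/q \rfloor = 0$, the sum $\sum_{j=1}^{0}\permx_j$ is empty and therefore equals $0$; moreover the fractional coefficient $\omega/q - \lfloor \omega/q \rfloor$ also equals $0$, so the last term vanishes as well. Hence at $\omega = 0$ the perturbed cost reduces to the bare nominal cost $\hc^\top x$, and the inequality in~\eqref{eq:gammaq} becomes precisely $\hc^\top x \ge UB$. This is exactly the claimed statement, so nothing further is needed.

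There is essentially no obstacle here: the observation is an immediate reading of~\eqref{eq:gammaq} at $\omega = 0$, and in fact it is the base case that justifies starting the resistance count from $0$. The only point requiring (minimal) care is noting that both the integer-summation term and the fractional correction term degenerate to zero at $\omega = 0$, which is what forces the defining inequality to collapse to the nominal-cost condition. If one wished, one could also phrase this contrapositively to match the intended usage in the algorithm (namely, that any $x$ with $\hc^\top x < UB$ has $\gamma^q(x)\ge 1$ and hence lies outside $\X^q_0$), but the direct argument above already suffices.
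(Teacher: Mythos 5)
Your proof is correct and matches the paper's (implicit) reasoning: the paper states this as an unproved observation precisely because it follows immediately from evaluating the defining inequality of~\eqref{eq:gammaq} at $\omega=0$, where both the summation and the fractional term vanish. Nothing is missing.
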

Following Lemma~\ref{lem:resistance} and Observations~\ref{obs:LB0} and~\ref{obs:X0}, we need to enumerate only the subset of all $k$-tuples defined by
$$
\X^{k,q}_\Gamma = \left\{\xx\in\X^k: \x{j}\in\X_{\omega_j}^q, \omega_j\in[q\cdot \Gamma], j\in[k], \sum_{j\in [k]} \omega_j > q\cdot\Gamma\right\}.
$$

Notice that the sets $\X_\omega^q$ can be updated every time a better upper bound is found, which explains the presence of \textbf{break} in line~\ref{algo:break} of Algorithm~\ref{algo:generalBB}. In particular, the cardinality $|\X_0^q|$ increases with $UB$. Hence, Observation~\ref{obs:X0} implies that restarting the enumeration (step~4 of Algorithm~\ref{algo:generalBB}) when improving $UB$ possibly leads to the removal of many elements of $\X$ at each restart.

\subsection{Handling Symmetry} 
The symmetry among the elements of $\X^k$ can be used to reduce the set of feasible solutions. Specifically, if $\xx$ and $\xx'$ are made of the same elements of $\X$, but listed in different orders, $cost(\xx)=cost(\xx')$. Hence, we focus in what follows on the set $\XX^k\subset\X^k$, defined as $\XX^k=\{(x_{s_1},\ldots,x_{s_k})\in \X^k: s_j<s_{j+1}, j=1,\ldots,k-1\}$, and we define $\XX^{k,q}_\Gamma$ analogously to Section~\ref{sec:resistance}.

\subsection{Computing $\X$}
\label{sec:computingX}

In this section we detail two algorithms to enumerate all elements of $\X'=\{x\in \X:\hc^\top x < UB\}$. Our first approach relies on an ad-hoc branch-and-bound algorithm that enumerates all feasible solutions to the problem
$$
\min_{x\in \X, \hc^\top x < UB} \hc^\top x,
$$
by branching iteratively on all variables, and collecting all leaves having accumulated $n$ branching constraints. To avoid exploring the full branch-and-bound tree (which would contain $2^n$ leaves for $\X=\{0,1\}^n$), the algorithm combines $UB$ with the bound provided by a relaxation to prune parts of the tree. Specifically, let $\X^{LP}$ be a formulation for $\X$, that is, a polytope such that $\X^{LP}\cap\{0,1\}^n=\X$, and let us introduce the branching constraints through the disjoint sets $O,Z\subseteq[n]$. At each node of the branch-and-bound tree, the algorithm solves the relaxation $\X^{LP}$ together with the branching constraints accumulated so far
\begin{equation}\label{eq:LP}\tag{LP}
\begin{aligned}
z^{LP}\;=\;\min \ & \sum_{i\in[n]} \hc_i x_i\\ 
\text{s.t. } & x \in \X^{LP} \\
& x_i = 0 &\forall i \in Z \\
& x_i = 1 &\forall i \in O
\end{aligned}
\end{equation}
Nodes of the tree are pruned either because \eqref{eq:LP} is infeasible or because $z^{LP}\geq UB$.

The above approach can be improved significantly for problems for which $\X$ can be enumerated through recursive algorithms, such as the knapsack problem, the shortest path problem, the traveling salesman problem, and spanning (Steiner) tree problems, among many others. In that situation, one can embed the constraint $\hc^\top x < UB$ in the recursive algorithms, allowing us to generate all elements of $\X'$ for problems of reasonable dimensions. Let us detail this approach for problem of finding a shortest path from $s$ to $t$ in an undirected graph $G$ with positive costs $\hc$, considering a Depth First Search (DFS). We execute first the Dijkstra algorithm from $t$ to compute the distance between $t$ and each node $v$ in the graph $d(v,t)$. Then, every time a node $v$ is discovered by the DFS along a path $P$, starting from $s$, we further consider its successors only if 
$
\sum_{i\in P}\hc_i + d(v,t) < UB.
$

We compare the two algorithms in our computational experiments.

\section{Computational Results}
\label{sec:experiments}

The aim of this section is two-fold. First, we assess the cost reduction offered by the min-max-min model \eqref{eq:minmaxmin_dU} when compared to the classical min-max robust model \eqref{eq:minmax}. Second, we evaluate in detail the numerical efficiency of the proposed exact and heuristic solution algorithms. Our experiments are carried out on a set of shortest path instances previously used by~\cite{hanasusanto2015k} and on randomly generated instances for the min-knapsack problem. In what follows, HKW denotes 
linearized compact formulation from Section~\ref{sec:problemref} (previously used in \cite{hanasusanto2015k}),
heur stands for the heuristic algorithm from Section~\ref{sec:local},
BB denotes the branch-and-bound Algorithm~\ref{alg:exact_2_adapt} from Section~\ref{sec:bb}, EA denotes the enumeration Algorithm~\ref{algo:generalBB} from Section~\ref{sec:algo:general}.
 
All LPs and MIPs involved are solved by CPLEX version~$12.6$. Algorithms from Section~\ref{sec:algo:convex} and Section~\ref{sec:algo:general} are implemented in $C$++ and julia, respectively. The local search uses a processor i5-3470 running at 3.2 Ghz while the exact algorithms use a processor Intel X5460 running at 3.16 GHz, respectively. Note that for the experiments in \cite{hanasusanto2015k}, Gurobi Optimizer 5.6 was used, but no details are provided on their computer speed.
All solution times are reported in seconds.

\subsection{Shortest Path Problem (SP)}
\subsubsection{Instances}
We use the shortest path instances presented in \cite{hanasusanto2015k}. Each instance is described by three parameters: the number of nodes $|V|\in\{15 + 5i : i\in[7]\}$ of the underlying graph $G=(V,E)$, the number $k\in\{2,3,4\}$ of candidate solutions, and the parameter $\Gamma\in\{3,6\}$ which specifies the size of the uncertainty set. 
For each parameter combination $100$ instances are randomly generated, which results in 4200 instances in total.
For the heuristic algorithm presented in Section \ref{sec:local}, we set the time limit of the $x$-step to $300$ seconds. For all $4200$ problem instances this time limit was only met $6$ times. For the exact approaches, we set a time limit of 7200 seconds for each experiment, as in \cite{hanasusanto2015k}.

\subsubsection{Solution Costs}

We present in Table~\ref{fig:costred} the cost reductions obtained by each algorithm compared to the cost provided by the robust model \eqref{eq:minmax}. Specifically, for each algorithm $A$, we report
\begin{equation}
\label{eq:cost_red}
\mbox{cost\_red}(A)=100\times \frac{\opt\eqref{eq:minmax} - \opt(A)}{\opt\eqref{eq:minmax}}.
\end{equation}
Notice that the three exact algorithms (BB, EA, and HKW) leave some instances unsolved (see the next section for details), explaining why two approaches $A$ and $A'$ for a given value of $k$ may lead to different values for cost\_red$(A)$. We see from the table that EA always obtains the highest value, followed closely by the three other approaches. In particular, the table underlines that the feasible solutions calculated by HKW are of good quality, even though the algorithm cannot prove optimality for most instances and often finishes with optimality gaps greater than 5\% (see Tables~\ref{tab_results_exact} and~\ref{tab_results_exact_k3}). The quality of the heuristic is also very good, following closely the results of HKW, even improving over the latter in some cases ($k=4$, $\Gamma=6$ and $|V|\geq 40$). Last, the table illustrates the decreasing benefit of increasing the value of $k$. While the cost reduction is important for $k=2$, the subsequent improvements are much smaller, in particular for $\Gamma = 3$.

\begin{table}
\centering
\begin{tabular}{c|c|r|r|r|r||r|r|r||r|r}
    && \multicolumn{4}{c||}{$k=2$} & \multicolumn{3}{c||}{$k=3$} & \multicolumn{2}{c}{$k=4$} \\	
$\Gamma$&$|V|$&	EA&	BB&	heur&	HKW&	EA&	heur&	HKW&	heur&	HKW\\
\hline
\multirow{8}{*}{3}&20	& 7.6	& 7.6	& 7.5	& 7.6	& 9.2	& 9.0	& 9.2	& 9.3	& 9.5\\
&25	& 8.6	& 8.6	& 8.6	& 8.6	& 10.7	& 10.6	& 10.7	& 11.0	& 11.3\\
&30	& 8.7	& 8.7	& 8.6	& 8.7	& 11.3	& 11.1	& 11.3	& 11.9	& 12.2\\
&35	& 9.0	& 9.0	& 8.9	& 9.0	& 12.0	& 12.0	& 12.0	& 12.8	& 13.1\\
&40	& 9.0	& 9.0	& 9.0	& 9.0	& 12.1	& 12.0	& 12.0	& 12.9	& 13.2\\
&45	& 8.6	& 8.6	& 8.5	& 8.6	& 11.8	& 11.7	& 11.8	& 12.9	& 13.1\\
&50	& 8.6	& 8.6	& 8.4	& 8.6	& 11.8	& 11.7	& 11.7	& 13.0	& 13.1\\
\cline{2-11}
&AVG	& 8.5	& 8.5	& 8.4	& 8.5	& 10.8	& 10.7	& 10.8	& 11.2	& 11.5\\
\hline
\hline
\multirow{8}{*}{6}&20	& 6.7	& 6.7	& 6.6	& 6.7	& 10.2	& 10.1	& 10.2	& 11.2	& 11.4\\
&25	& 8.2	& 8.2	& 8.1	& 8.2	& 12.2	& 12.1	& 12.2	& 13.6	& 13.8\\
&30	& 8.6	& 8.6	& 8.5	& 8.6	& 13.0	& 12.9	& 13.0	& 15.0	& 15.1\\
&35	& 9.2	& 9.2	& 9.1	& 9.2	& 13.9	& 13.8	& 13.9	& 16.3	& 16.3\\
&40	& 9.5	& 9.5	& 9.4	& 9.5	& 14.3	& 14.2	& 14.2	& 16.7	& 16.6\\
&45	& 9.8	& 9.7	& 9.6	& 9.7	& 14.5	& 14.4	& 14.4	& 17.0	& 16.9\\
&50	& 9.8	& 9.7	& 9.6	& 9.7	& 14.5	& 14.4	& 14.4	& 17.0	& 16.9\\
\cline{2-11}
&AVG	& 8.2	& 8.2	& 8.1	& 8.2	& 12.3	& 12.3	& 12.3	& 14.0	& 14.1
	\end{tabular}
\caption{Cost reduction cost\_red$(A)$ for each algorithm $A$ for the SP.\label{fig:costred}}
\end{table}

\subsubsection{Solution Times}

We first present the solution times of the heuristic algorithm, before investigating in detail the results of the three exact algorithms. The average run times of the heuristic are reported in Table~\ref{tab_runtime_heuristic} for each group of 100 instances. The vast majority of instances are solved within a minute, often in a few seconds. 

\begin{table}[htb]
\centering
\begin{tabular}{c|r|r|r||r|r|r}
		& \multicolumn{3}{c||}{$\Gamma =3$} & \multicolumn{3}{c}{$\Gamma =6$} \\ \cline{2-7}
$|V|$ 	& \multicolumn{1}{c|}{$k=2$} & \multicolumn{1}{c|}{$k=3$} & \multicolumn{1}{c||}{$k=4$} & \multicolumn{1}{c|}{$k=2$} & \multicolumn{1}{c|}{$k=3$} & \multicolumn{1}{c}{$k=4$} \\ \hline
20   	&   $0.1$  &  $0.1$   &  $0.2$   &  $0.2$   & $0.2$    &  $0.2$ \\
25   	&   $0.2$  &  $0.3$   &  $0.2$   &  $0.3$   & $0.6$    &  $0.6$ \\
30   	&   $0.4$  &  $0.6$   &  $0.5$   &  $0.7$   & $1.3$    &  $2.3$ \\
35   	&   $0.7$  &  $1.1$   &  $1.1$   &  $1.6$   & $4.2$    &  $7.5$ \\
40   	&   $1.0$  &  $1.3$   &  $1.6$   &  $3.3$   & $7.8$    &  $19.9$ \\
45   	&   $1.7$  &  $2.5$   &  $3.5$   &  $6.3$   & $18.8$    &  $39.7$ \\
50   	&   $2.1$  &  $3.0$   &  $4.9$   &  $12.1$   & $29.6$    &  $73.5$
\end{tabular}
\caption{Computation times of heur in seconds for the SP.}
\label{tab_runtime_heuristic}
\end{table}

\begin{table}
\centering
\begin{tabular}{c|c||r|r H||r|r|r||r|r}
& & \multicolumn{3}{c||}{Time} & \multicolumn{3}{c||}{\% Solved} & \multicolumn{2}{c}{\% Gap} \\
$\Gamma$ & $|V|$ & BB & EA & heu. & BB & EA &	HKW & BB & HKW \\ \hline
\multirow{7}{*}{3}	
& 20   	&  9.9$\pm$8.1& 0.2$\pm$0.8   & 0.1$\pm$0.0 & $100$ & 100 & 100   &  $0.00$ & $0.00$ 		\\
& 25   	&  19.5$\pm$13.4 & 0.2$\pm$0.7   & 0.1$\pm$0.1 & 100 & 100 & 99  &  $0.00$ &  $3.68$ 		\\
& 30   	&  53.4$\pm$57.4 & 0.9$\pm$3.0   & 0.3$\pm$0.2   &100 & 100 & 69 &  $0.00$ &  $5.69$		\\
& 35   	&  92.4$\pm$103.6 & 0.9$\pm$1.0  & 0.5$\pm$0.5  & 100 & 100 & 17  &  $0.00$ &  $5.70$	 	\\
& 40   	&  168.2$\pm$176.3 & 2.0$\pm$2.9  & 0.8$\pm$0.8 & 100 & 100 & 6  &  $0.00$ &  $5.96$	 	\\
& 45   	&  385.3$\pm$429.6 & 8.0$\pm$16.4 & 1.8$\pm$2.3  & 100 & 100 & 0 &  $0.00$ &  $6.48$ 		\\
& 50   	&  654.9$\pm$700.3 & 12.9$\pm$17.9  & 2.5$\pm$2.5 & 100 & 100 & 0  &  $0.00$ &  $6.75$ 		\\ \hline
\multirow{7}{*}{6}  
& 20   	&  186.1$\pm$832.8 & 0.2$\pm$0.8  & 0.1$\pm$0.2    & 99 & 100 & 100 &  $8.0e$-$6$  & $0.00$ 			\\
& 25   	&  153.2$\pm$358.2 & 0.6$\pm$0.6  & 0.4$\pm$0.8 & 100 & 100 & 100  &  $0.00$ & $8.00$ 			\\
& 30   	&  651.8$\pm$930.9 & 2.8$\pm$3.7  & 1.1$\pm$1.4   & 100 & 100 & 67 &  $0.00$ & $10.72$			\\
& 35   	&  1984.2$\pm$2075.2 & 9.0$\pm$10.9  & 3.5$\pm$4.8  & 92 & 100 & 16  &  $7.3e$-$3$ & $10.76$	\\
& 40   	&  3484.7$\pm$2723.7 & 26.3$\pm$34.0  & 9.3$\pm$14.1  & 72 & 100 & 5 &  $1.8e$-$2$ & $11.29$		\\
& 45   	&  4897.6$\pm$2538.6 & 117.7$\pm$312.5 & 21.3$\pm$30.5  & 54 & 100 & 0 &  $1.2e$-$1$ & $11.79$ 	\\
& 50   	&  6188.8$\pm$1951.9 & 318.6$\pm$577.2  & 50.2$\pm$114.7  & 30 & 99 & 0 &  $2.3e$-$1$ & $12.31$ 
\end{tabular}
\caption{Comparison of HKW, EA, and BB for $k=2$ for the SP.}
\label{tab_results_exact}
\end{table}

We next turn to the exact methods and provide a more detailed presentation of the results. We present in Table~\ref{tab_results_exact} a comparison of the exact solution times of BB, EA using the DFS strategy described in Section~\ref{sec:computingX}, and HKW for $k=2$. Computation times include the means and standard deviations over the 100 instances of each group (unsolved instances count for 7200 seconds). In the last two columns we report the average percental gap between upper and lower bound which was reached after the time limit of 7200 seconds. We see that all instances were solved to optimality by EA in a few seconds, and to near optimality by BB. It turns out that the instances of the smaller uncertainty set ($\Gamma = 3$) are easier to solve by our methods. For some instances of the larger uncertainty set the time limit was reached by BB. However, the remaining gap between upper and lower bound reported by BB is reasonably small as reported in Table~\ref{tab_results_exact}. 
Notice that algorithm EA returns no optimality gap when it fails to solve the problem to optimality.

Table~\ref{tab_results_exact} clearly shows that both EA and BB outperform HKW by orders of magnitude. Further, EA is also much faster than BB. Notice, however, that EA requires large amounts of memory: the only unsolved instance by EA failed because of a memory hit, using a computer with 48 GB of memory. In fact, many large instances require more than 20 GB of memory, while BB can handle all instances with a few GBs. The difference in memory consumption is due to the large cardinality of the set $\X$ and the small number of nodes explored by BB. These two aspects are further investigated in Figure~\ref{fig:X}.
Figure~\ref{fig:X:a} shows that the number of solutions $x$ that satisfy $\hc^\top x<UB$ increases nearly exponentially with $|V|$, reaching roughly $8\times10^8$ for $|V|=50$ and $\Gamma=6$. In contrast, 
Figure~\ref{fig:X:nodes} shows that the number of nodes explored by BB does not seem impacted by $|V|$ and revolves around 100 nodes, while Figure~\ref{fig:X:gap} even shows the root gaps decrease with $|V|$. The three charts of Figure~\ref{fig:X} also indicate that $\Gamma=6$ leads to significantly harder instances than $\Gamma=3$. 

Figure~\ref{fig:costcomp} investigates the effect of the \emph{resistance} and the lower bounds, described in Sections~\ref{sec:resistance} and~\ref{sec:lb}, respectively. Recall that, without using resistance, the number of $2$-tuples enumerated should be $\frac{|\X|(|\X|-1)}{2}$. Hence, $|\X|\sim 100$ should lead to $5\times 10^4$ 2-tuples, which is far from the results shown on the plain boxes from Figure~\ref{fig:costcomp}. For instance, consider the case $\Gamma=6$ and $|V|=50$. Figure~\ref{fig:X:a} shows that for nearly 95\% of the instances, $|\X|\geq 10^5$. Yet, Figure~\ref{fig:costcomp:6} shows that more than 95\% of the instances enumerate at most $10^5$ 2-tuples. Regarding the interest of the lower bounds, the dotted boxplots from Figure~\ref{fig:costcomp} show that the number of 2-tuples for which $cost(x_{s_1},x_{s_2})$ is actually computed is between half and one order of magnitude less than the number of 2-tuples enumerated.

\begin{figure}
	\centering
	\subfigure[$|\X|$]{\label{fig:X:a}
	\includegraphics[width=7cm]{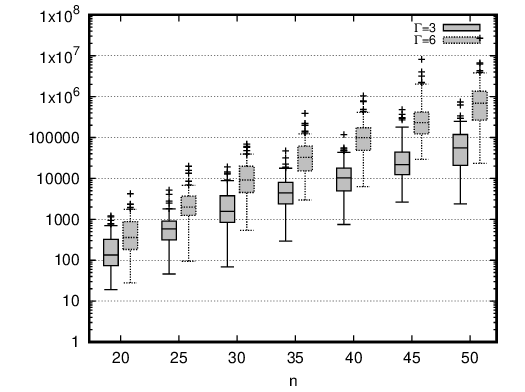}}
		\subfigure[nodes explored by BB]{\label{fig:X:nodes}
	\includegraphics[width=7cm]{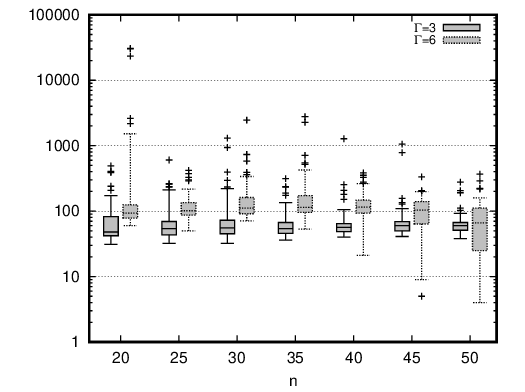}
	}
		\subfigure[root gap]{\label{fig:X:gap}
	\includegraphics[width=7cm]{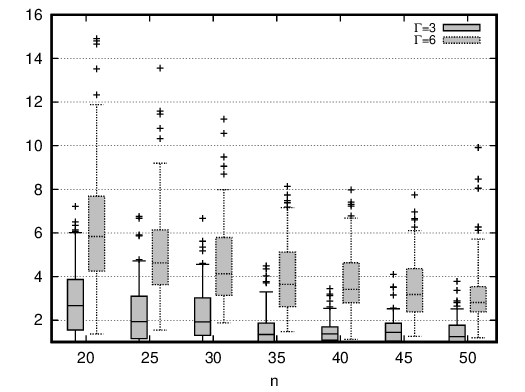}
	}
\caption{Size of $\X$ for the SP generated in Step~\ref{alg:genX} of Algorithm~\ref{algo:generalBB}, number of nodes explored by BB, in logarithmic scale, and the root gap of BB. The box contains 50\% of the data samples, cut by the median. The whiskers extend the box to cover 95\% of the samples.\label{fig:X}}
\end{figure}
\begin{figure}
	\centering
	\subfigure[$\Gamma=3$]{
	\includegraphics[width=7cm]{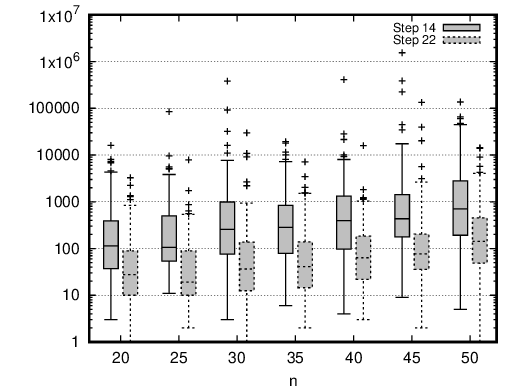}}
		\subfigure[$\Gamma=6$]{\label{fig:costcomp:6}
	\includegraphics[width=7cm]{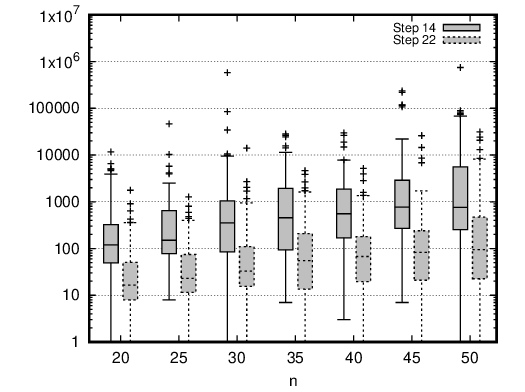}
	}
\caption{Number of $2$-tuples $(x_{s_1},x_{s_2})$ handled by Algorithm~\ref{algo:generalBB} at steps~\ref{algo:lb} and~\ref{algo:cost}, in logarithmic scale for the SP.\label{fig:costcomp}}
\end{figure}

\begin{table}
\centering
\begin{tabular}{c|c H H H||r H||r|r||r}
& & \multicolumn{3}{c||}{} & \multicolumn{2}{c||}{Time EA} & \multicolumn{2}{c||}{\% Solved} & \% Gap \\
$\Gamma$ & $|V|$ & EA & HKW & heu. &  & heu. & EA &	HKW & HWK\\ \hline
\multirow{7}{*}{3} & 
20 & 14.9166 & 14.9166 & 14.9417 & 80.7$\pm$261.0 & 0.11 & 100 & 97 & 1.60 \\
&25 & 14.9103 & 14.9133 & 14.9417 & 249.3$\pm$1027.0 & 0.28 & 98 & 31 & 1.14 \\
&30 & 14.6429 & 14.6447 & 14.6703 & 621.7$\pm$1539.8 & 5.81 & 97 & 6 & 1.71 \\
&35 & 14.4564 & 14.4621 & X & 741.0$\pm$1795.5 & 10.58 & 95 & 0 & 2.23 \\
&40 & 14.3203  & 14.3257  & X & 1232.8$\pm$2322.8 & 10.58 & 90 & 0 & 2.59 \\
&45 & 14.2275  & 14.2355  & X & 1871.9$\pm$2832.0 & 10.58 & 82 & 0 & 3.14 \\
&50 & 14.1522  & 14.1679 & X & 2210.7$\pm$2817.2 & 10.58 & 80 & 0 & 3.44 \\
\hline
\multirow{7}{*}{6} & 
20 & 16.4442 & 16.4442 & 16.4499 & 49.2$\pm$172.6 & 0.19 & 100 & 97 & 4.06 \\
&25 & 16.4114 & 16.4123 & 16.4217 & 249.4$\pm$1025.0 & 5.89 & 99 & 38 & 3.52 \\
&30 & 16.0752 & 16.0773 & 16.09271 & 593.4$\pm$1455.9 & 41.90 & 97 & 6 & 4.54 \\
&35 & 15.8744 & 15.8827 & 16.09271 & 1133.6$\pm$1964.7 & 41.90 & 96 & 0 & 5.58 \\
&40 & 14.6961 & 15.7050 & 16.09271 & 2466.6$\pm$2646.6 & 41.90 & 82 & 0 & 6.19 \\
&45 & 15.5472 & 15.5593 & 16.09271 & 4529.1$\pm$2846.3 & 41.90 & 53 & 0 & 6.92 \\
&50 & 15.4500 & 15.4751 & 16.09271 & 6021.5$\pm$2122.8 & 41.90 & 29 & 0 & 7.55 \\
\end{tabular}
\caption{Comparison of HKW and EA for $k=3$ for the SP.}
\label{tab_results_exact_k3}
\end{table}

We present in Table~\ref{tab_results_exact_k3} a comparison of the exact solution times of EA and HKW for $k=3$. While EA cannot solve all instances during the time limit, it still outperforms HKW significantly, solving 642 ($\Gamma=3$) and 556 ($\Gamma=6$) instances to optimality (out of 700), instead of 134 and 141 for HKW. 

\begin{table}
\centering
\begin{tabular}{c||r|r||r|r}
& \multicolumn{2}{c||}{$\Gamma = 3$} & \multicolumn{2}{c}{$\Gamma = 6$} \\
$|V|$ & Time & \% Solved & Time & \% Solved\\
\hline
20 & 3.1$\pm$3.2  & 100 & 3.3$\pm$3.7 & 100 \\
25 & 16.2$\pm$15.4 & 100 & 16.3$\pm$15.9 & 100 \\
30 & 100.5$\pm$114.1 & 100 & 105.1$\pm$116.6 & 100 \\
35 & 347.1$\pm$362.4 & 100 & 358.5$\pm$372.1 & 100 \\
40 & 1148.2$\pm$1146.8 & 99 & 1174.0$\pm$1163.0 & 99 \\
45 & 3310.7$\pm$2281.9 & 85 & 3348.0$\pm$2288.9 & 85 \\
50 & 5374.4$\pm$2283.9 & 45 & 5402.5$\pm$2263.2 & 45 \\
\end{tabular}
\caption{Results of EA generating $\X$ using a vanilla branch-and-bound algorithm for $k=2$ and $\Gamma=3$ for the SP.}
\label{tab:LPbb}
\end{table}

The results presented for EA so far have relied on the DFS strategy to iterate through the set $\X$. To understand whether the alternative LP-based branch-and-bound algorithm is a realistic way to iterate through $\X$, we have also coded a vanilla version of that branch-and-bound algorithm for the shortest path problem. The latter is coded in julia, using package JuMP, and does not implement advanced warm-starts when processing a new node. The results presented in Table~\ref{tab:LPbb} indicate that this strategy is orders of magnitude slower than the DFS. Yet, it is able to solve nearly all instances having less than 50 nodes during the time limit. Nearly 100\% of the time is spent in the generation of $\X$.

\subsection{Knapsack Problem (KP)}

\subsubsection{Instances}
Since the paper has studied minimization optimization problems so far, we consider next a minimization version of the knapsack problem defined as
$$
\min \left\{\sum_{i\in [n]} c_i x_i : \sum_{i\in [n]} w_i x_i \geq W, \; x\in \{0,1\}^n\right\}.
$$
For each dimension $n$ the costs $c_i$ and the weights $w_i$ were drawn from a uniform distribution on $\left\{ 1,\ldots ,100\right\}$. The knapsack capacity $W$ was set to $35\%$ of the sum of all weights. For each knapsack instance we generate a budgeted uncertainty set with mean vector $\hat c=c$ and a random deviation vector $d$ where each $d_i$ is drawn uniformly in $\left\{ 1,\ldots ,c_i\right\}$. Each instance is described by three parameters: the number of items $|n|\in\{50i : i\in[4]\}$, the number $k\in\{2,3,4\}$ of candidate solutions, and the parameter $\Gamma\in\{3,6\}$ which specifies the size of the uncertainty set. 
For each parameter combination $10$ instances are randomly generated, which results in 240 instances in total.
We set a time limit of 3600 seconds for each experiment.
The approach HKW mentioned in the following reports on our implementation of the linearized formulation described in Section~\ref{sec:algo:convex}.

\subsubsection{Solution Costs}

We present in Table~\ref{fig:knap:costred} the application of formula~\eqref{eq:cost_red} to the knapsack instances. As for the shortest path, the three exact algorithms (BB, EA, and HKW) leave some instances unsolved, see the next section for details. The table shows that the cost reductions for the knapsack problem are much smaller than those obtained for the shortest path problem, and these reductions decrease with the size of the instances. We also see that the heuristic solutions are usually better than the best solutions returned by HKW, which is not surprising given the high gaps returned by the latter (see Table~\ref{knap_tab_results_exact}).

\begin{table}
\color{black}
\centering
\begin{tabular}{c|c|r|r|r|r||r|r|r||r|r}
    && \multicolumn{4}{c||}{$k=2$} & \multicolumn{3}{c||}{$k=3$} & \multicolumn{2}{c}{$k=4$} \\	
$\Gamma$&$|V|$&	EA&	BB&	heur&	HKW&	EA&	heur&	HKW&	heur&	HKW\\
\hline
\multirow{4}{*}{3}&50 & 2.0 & 2.0 & 1.9 & 1.9 & 2.5 & 2.4 & 2.1 & 2.5 & 2.3 \\
&100 & 2.0 & 2.0 & 2.0 & 1.6 & 2.0 & 2.4 & 1.4 & 2.5 & 1.2 \\
&150 & 1.2 & 1.2 & 1.2 & 0.3 & 1.2 & 1.5 & 0.0 & 1.6 & 0.0 \\
&200 & 0.5 & 0.5 & 0.5 & 0.0 & 0.5 & 0.7 & 0.0 & 0.8 & 0.0 \\
\cline{2-11}
&AVG & 1.4 & 1.4 & 1.4 & 1.0 & 1.5 & 1.7 & 0.9 & 1.9 & 0.9 \\
\hline
\hline
\multirow{4}{*}{6}&
50 & 1.5 & 1.5 & 1.4 & 1.5 & 2.2 & 2.1 & 1.6 & 2.4 & 1.4 \\
&100 & 2.4 & 2.5 & 2.4 & 2.0 & 2.4 & 3.2 & 1.4 & 3.4 & 1.0 \\
&150 & 2.0 & 2.0 & 2.0 & 0.4 & 2.0 & 2.4 & 0.7 & 2.6 & 0.1 \\
&200 & 1.1 & 1.1 & 1.1 & 0.0 & 1.1 & 1.3 & 0.0 & 1.4 & 0.0 \\
\cline{2-11}
 &AVG & 1.8 & 1.8 & 1.7 & 1.0 & 1.9 & 2.3 & 0.9 & 2.5 & 0.6 \\
	\end{tabular}
\caption{Cost reduction cost\_red$(A)$ for each algorithm $A$ and value of $k$ for the KP.\label{fig:knap:costred}}
\end{table}

\subsubsection{Solution Times}

We first present the solution times of the heuristic algorithm. The average run times of the heuristic are reported in Table~\ref{knap_tab_runtime_heuristic} for each group of 100 instances. As before, the vast majority of instances are solved within a minute, often in a few seconds. 

\begin{table}[htb]
\color{black}
\centering
\begin{tabular}{c|r|r|r||r|r|r}
		& \multicolumn{3}{c||}{$\Gamma =3$} & \multicolumn{3}{c}{$\Gamma =6$} \\ \cline{2-7}
$|V|$ 	& \multicolumn{1}{c|}{$k=2$} & \multicolumn{1}{c|}{$k=3$} & \multicolumn{1}{c||}{$k=4$} & \multicolumn{1}{c|}{$k=2$} & \multicolumn{1}{c|}{$k=3$} & \multicolumn{1}{c}{$k=4$} \\ \hline
50 & 0.0 & 0.1 & 0.1 & 0.0 & 0.1 & 0.1 \\
100 & 0.1 & 0.3 & 1.0 & 0.2 & 0.6 & 3.9 \\
150 & 0.2 & 1.5 & 8.9 & 0.3 & 2.7 & 56.8 \\
200 & 0.4 & 1.3 & 15.2 & 0.5 & 3.9 & 34.8 \\
\end{tabular}
\caption{Computation times of heur in seconds for the KP.}
\label{knap_tab_runtime_heuristic}
\end{table}

Table~\ref{knap_tab_results_exact} compares HKW, BB, and EA on the knapsack instances for $k=2$. The following observations arise from results presented in the table. First, HKW is not able to solve \emph{any} of the instances, often resulting in large optimality gaps. The inefficiency of HKW is explained by its extremely weak continuous relaxation, the optimal solution of which is 0 for all instances. Second, EA is only able to solve the smallest instances. For 100 items or more, it cannot finish the first step of the algorithm (enumerating the solutions). As it turns out, the knapsack problem is particularly difficult for EA because that problem lacks strong constraints on the structure of the feasible solutions, so that there exists plenty of similar solutions that cannot be removed before the algorithm starts. Third, as for the shortest path, BB can solve to optimality most of the instances, ending with small optimality gaps for the unsolved ones. The efficiency of BB is mainly due to its tight root gap, as further illustrated in Figure~\ref{fig:BBrootgap}.

\begin{table}
\color{black}
\centering
\begin{tabular}{c|c||r|r||r|r|r||r|r}
& & \multicolumn{2}{c||}{Time} & \multicolumn{3}{c||}{\% Solved} & \multicolumn{2}{c}{\% Gap} \\
$\Gamma$ & $|V|$ & BB & EA & BB & EA &	HKW & BB & HKW \\ \hline
\multirow{4}{*}{3} &	
50 & 10.9$\pm$6.7 & 73.9$\pm$73.6 & 100 & 100 & 0 & 0 & 14.0 \\
&100 & 32.3$\pm$29.9 & 3600 & 100 & 0 & 0 & 0 & 69.9 \\
&150 & 411.7$\pm$1063.5 & 3600 & 90 & 0 & 0 & 2.8$e$-2 & 90.0 \\
&200 & 636.6$\pm$1037.2 & 3600 & 90 & 0 & 0 & 1.2 & 96.0 \\
\hline
\multirow{4}{*}{6} &50 & 69.6$\pm$124.8 & 259.8$\pm$328.5 & 100 & 100 & 0 & 0 & 15.2 \\
&100 & 351.9$\pm$811.5 & 3600 & 100 & 0 & 0 & 0 & 71.1 \\
&150 & 540.9$\pm$550.1 & 3600 & 100 & 0 & 0 & 0 & 90.2 \\
&200 & 1905.9$\pm$1244.4 & 3600 & 70 & 0 & 0 & 1.8 & 96.0 \\	
\end{tabular}
\caption{Comparison of HKW, EA, and BB for $k=2$ for the KP.}
\label{knap_tab_results_exact}
\end{table}

\begin{figure}
	\centering
	\includegraphics[width=7cm]{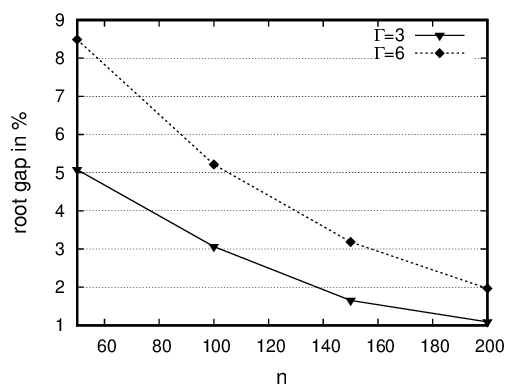}
\caption{Root gaps for the KP.\label{fig:BBrootgap}}
\end{figure}

For $k=3$, EA is able to solve 7 out of 10 instances for $\Gamma=3$, and 4 out of 10 instances for $\Gamma=6$, while the HKW solves none of them, ending with average gaps of 64\% and 66\%, respectively. Larger instances cannot be solved by EA and HKW ends up with gaps above 90\%.

\section{Conclusions}
\label{sec:conclusions}

Min-max-min combinatorial optimization problems form a class of notoriously difficult optimization problems. In this manuscript, we have provided a first step towards their efficient solution, focusing on the case of budgeted uncertainty. In this work we derived three fast algorithms to solve these problems exactly and heuristically, two of them based on a mixed-integer non-linear reformulation while the third is a discrete enumeration scheme involving ad-hoc dominance rules. The proposed exact algorithms have overcome the difficulties encountered by the previous literature \citep{hanasusanto2015k,subramanyam2017k} for $k=2$, by solving problems in a couple of minutes (often seconds) that were unsolvable in one or two hours using the previous approaches. We also found encouraging results for $k=3$, solving many unsolved instances to optimality in a short amount of time. While our exact approaches can hardly handle larger values of $k$, our local search heuristic based on the non-linear reformulation performs well, providing near-optimal solutions quickly.

In the future, we intend to develop solution algorithms able to solve the problem exactly for larger values of $k$ and more general uncertainty sets. One idea is to use strong integer programming tools based on extended formulations. Preliminary results seem to indicate that the linear programming relaxation of these formulations are strong, hopefully leading to efficient branch-and-bound algorithms.

\section{Acknowledgements}

This research benefited from the support of the FMJH Program PGMO and from the support to this program from EDF-THALES-ORANGE- CRITEO. The authors are also grateful to the referees for their helpful suggestions.

\end{document}